\def\rr{{\mathbb R}}
\def\nn{{\mathbb N}}
\def\cm{{\mathcal M}}
\def\cx{{\mathcal X}}
\def\fz{\infty}
\def\az{\alpha}
\def\gfz{\genfrac{}{}{0pt}{}}
\def\lz{\lambda}
\def\dz{\delta}
\def\bdz{\Delta}
\def\ez{\epsilon}
\def\bz{\beta}
\def\gz{{\gamma}}
\def\wz{\widetilde}
\def\ls{\lesssim}
\def\gs{\gtrsim}
\def\tbz{{\triangle_\lz}}
\def\dmz{{dm_\lz}}
\def\riz{{R_{\Delta_\lz}}}
\def\rrp{{{\mathbb\rr}_+}}
\def\bmoz{{{\rm BMO}(\mathbb\rrp,\, dm_\lz)}}
\def\xtz{{x^{2\lz}\,dx}}
\def\ytz{{y^{2\lz}\,dy}}
\def\mxy{{m_\lz(I(x, |x-y|))}}
\def\loz{{L^1(\rrp,\, dm_\lz)}}
\def\liz{{L^{\infty}(\rrp,\,dm_\lz)}}
\def\lpz{{L^p(\rrp,\, dm_\lz)}}
\def\linz{{L^\fz(\rrp,\, dm_\lz)}}
\def\rrpi{\rrp\backslash\widetilde{\mathcal{I}}}
\def\varoz{{\mathcal V}_\rho(R_{\Delta_\lz,\,\ast})}
\def\osciz{{\mathcal O(R_{\Delta_\lz,\,\ast})}}
\def\oscizp{{\mathcal O'(R_{\Delta_\lz,\,\ast})}}
\def\dsum{\displaystyle\sum}
\def\dint{\displaystyle\int}
\def\dfrac{\displaystyle\frac}
\def\r{\right}
\def\lf{\left}
\def\beeqn{\begin{equation}}
\def\eneqn{\end{equation}}
\def\beeqns{\begin{equation*}}
\def\eneqns{\end{equation*}}
\def\beeqa{\begin{eqnarray}}
\def\eneqa{\begin{eqnarray}}
\def\beeqas{\begin{eqnarray*}}
\def\eneqas{\begin{eqnarray*}}
\def\besp{\begin{split}}
\def\ensp{\begin{split}}
\newtheorem{thm}{Theorem}[section]
\newtheorem{lem}[thm]{Lemma}%[section]
\newtheorem{cor}[thm]{Corollary}%[section]
\newtheorem{defn}[thm]{Definition}%[section]
\numberwithin{equation}{section}
\begin{document}

\arraycolsep=1pt

\title{\Large\bf  Oscillation and  variation for Riesz transform associated with Bessel operators}
\author{Huoxiong Wu, Dongyong Yang\,\footnote{Corresponding author}\, and Jing Zhang}

%\medskip
\date{}
\maketitle

\begin{center}
\begin{minipage}{13.5cm}\small

{\noindent  {\bf Abstract:}\  Let $\lambda>0$  and
$\triangle_\lambda:=-\frac{d^2}{dx^2}-\frac{2\lambda}{x}
\frac d{dx}$ be the Bessel operator on $\mathbb R_+:=(0,\infty)$. We
show that the oscillation operator $\mathcal{O}(R_{\Delta_{\lambda},\ast})$ and variation operator $\mathcal{V}_{\rho}(R_{\Delta_{\lambda},\ast})$ of the Riesz transform $R_{\Delta_{\lambda}}$ associated with
 $\Delta_\lambda$  are both bounded on $L^p(\mathbb R_+, dm_{\lambda})$ for $p\in(1,\,\infty)$, from $L^1(\mathbb{R}_{+},dm_{\lambda})$ to $L^{1,\,\infty}(\mathbb{R}_{+},dm_{\lambda})$, and from $L^{\infty}(\mathbb{R}_{+},dm_{\lambda})$ to $BMO(\mathbb{R}_{+},dm_{\lambda})$, where $\rho\in (2,\infty)$ and $dm_{\lambda}(x):=x^{2\lambda}dx$.  As an application, we give the corresponding $L^p$-estimates for $\beta$-jump operators and the number of up-crossing.}

\end{minipage}
\end{center}

\bigskip
\bigskip

{ {\it Keywords}: oscillation; variation; Bessel operator; Riesz transform.}

\medskip

{{Mathematics Subject Classification 2010:} {42B20, 42B35}}

\medskip

\thanks{H. Wu is supported by the NNSF of China (Grant Nos. 11371295, 11471041) and the NSF of Fujian Province of China (No. 2015J01025).}

\thanks{D. Yang is supported by the NNSF of China (Grant No. 11571289) and the State Scholarship Fund of China (No. 201406315078).}

\thanks{J. Zhang is supported by the NNSF of China (Grant No. 11561067).

\section{Introduction and statement of main results\label{s1}}

Let $(\mathcal X,  \mu)$ be a measure space and ${\mathcal T}_{\ast}:=\{T_\epsilon\}_{\epsilon>0}$
 be a family of operators bounded on $L^p(\mathcal X,  \mu)$ for $p\in(1, \fz)$
such that $\lim_{\epsilon\to0}T_{\epsilon}f$ exists
in some sense.  The variation operator $\mathcal{V}_{\rho}({\mathcal T}_{\ast})$
and oscillation operator $\mathcal{O}({\mathcal T}_{\ast})$ of $\mathcal T_\ast$ are two important tools to measure the speed of this convergence
in ergodic theory; see, for example, \cite{Bou,HMMT,jkrw,jr}.
We recall that  for any $f\in\lpz$ for $p\in(1, \fz)$ and $x\in\cx$, $\mathcal{V}_{\rho}({\mathcal T}_{\ast})(f)$ and
$\mathcal{O}({\mathcal T}_{\ast})(f)$ are, respectively, defined by setting
 \begin{equation*}\label{d-var}
 \mathcal{V}_{\rho}({\mathcal T}_{\ast})(f)(x):= \sup_{\epsilon_i\searrow0}
 \Big(\sum_{i=1}^{\infty}|T_{\epsilon_{i+1}}f(x)-T_{\epsilon_{i}}f(x)|^{\rho}\Big)^{1/\rho},
 \end{equation*}
 where the supremum is taken over all sequences $ \{\epsilon_{i}\}$ decreasing to zero, and
\begin{equation*}\label{d-oci}
\mathcal{O}({\mathcal T}_{\ast})(f)(x):=\Big(\sum_{i=1}^{\infty}\sup_{\epsilon_{i+1}\leq t_{i+1}
  <t_{i}\leq \epsilon_{i}}|T_{t_{i+1}}f(x)-T_{t_{i}}f(x)|^2\Big)^{1/2}
\end{equation*}
 with $\{\epsilon_{i}\}$ being a fixed sequence decreasing to zero.
We also consider the operator
 \begin{equation*}\label{d-o'ci}
 \mathcal{O}'({\mathcal{T}}_{\ast})(f)(x)=\Big(\sum_{i=1}^{\infty}\sup_{\ez_{i+1}<\delta_{i}\le \ez_{i}}
 |T_{\ez_{i+1}}f(x)-T_{\delta_{i}}f(x)|^2\Big)^{1/2}.
 \end{equation*}
 It is easy to check that
 \begin{equation}\label{oci-o'ci}
 \mathcal{O'}({\mathcal{T}}_{\ast})(f)(x)\leq \mathcal{O}({\mathcal{T}}_{\ast})(f)(x)\leq 2\mathcal{O'}({\mathcal{T}}_{\ast})(f)(x).
 \end{equation}
Denote by $\mathcal E$  the mixed norm
Banach space of two variables function $h$ defined on $(0,\infty)\times\mathbb{N}$ such that
\begin{equation}\label{mixed norm}
\| h \|_{\mathcal E}:=\lf(\sum_i \lf(\sup_{\delta_i}|h({\delta_i},i)|\r)^2\r)^{1/2}<\infty.
\end{equation}
Then we also have that
\begin{equation}\label{oe}
\mathcal{O'}(\mathcal{T}_{\ast})(f)(x)=||\{T_{t_{i+1}}f(x)-T_{\delta_i}f(x)\}_{{\delta_i}\in (t_{i+1},\,t_i],\,i\in \mathbb{N}}||_{\mathcal E}.
\end{equation}
%
% The $L^p$-boundedness of variation and oscillation were studied by Bourgain \cite{Bou} for $p=2$ in the context of ergodic theory. This work has inaugurated a new research direction in ergodic theory and harmonic analysis. It was improved by subsequent works and largely extended to many other operators in
%  ergodic theory; see, for instance \cite{jkrw,jrw,mx} . Since then, in harmonic analysis, the study of boundedness of oscillation and variation operators associated with semigroups
% of operators and families of truncations of singular integrals have been paid more and more attention.

 In their remarkable work \cite{cjrw1}, Campbell {\it et al.} established the
strong $(p,\,p)$-boundedness for $p\in(1, \fz)$ and the weak
type $(1,\,1)$-boundedness of the oscillation operator
 and the $\rho$-variation operator for the Hilbert transform, and applied to the study of $\lz$-jump operator in ergodic theory.
 This result was further extended in \cite{cjrw2}, to the higher dimensional cases including  Riesz transforms and  general singular integrals with rough homogeneous kernels in $\mathbb{R}^d$. Since then, boundedness of oscillation and variation operators of singular integrals
 operators associated with differential operators has
been studied in many recent papers.
  In particular, Gillespie and Torrea in \cite{gt} established weighted $L^p(\rr,\omega)$-boundedness of the oscillation operator  and the $\rho$-variation operator for the Hilbert transform, where $\omega\in A_p(\rr)$, the Mukenhoupt class and obtained the $L^p(\mathbb{R}^d,|x|^\alpha dx)$-boundedness of the oscillation operator
 and the $\rho$-variation operator for Riesz transform, for $p\in(1, \fz)$ and $\az\in(-1, p-1)$.
 Later, Betancor {\it et al.}  \cite{bfbmt-2} showed that the oscillation operator  and the $\rho$-variation operator
 of the Riesz transform $R_{S_\lz}$ associated with the Bessel operator $S_\lz$ for $\lz>0$ on $\rrp :=(0, \fz)$ is
 bounded on $L^p(\omega)$ and from $L^1(\omega)$ to $L^{1,\,\fz}(\omega)$, where $\omega\in A^p(\rrp)$ and $S_{\lz}:=-\frac{d^2}{dx^2}+\frac{\lz^2-\lz}{x^2}.$
% for suitable functions $f$ and $x\in\rrp$,
% $$S_{\lz}f(x):=-\frac{d^2}{dx^2}f(x)+\frac{\lz^2-\lz}{x^2}f(x).$$
% Recently, Betancor {\it et al.} in \cite{BFHR} also obtained the corresponding results associated with the family of Riesz transforms in the Schr\"odinger setting.
For more results on  variation and oscillation of singular integral operators, we refer the readers to \cite{Bou,bct,BFHR,cjrw1,cjrw2,cmtt,cmtv,gt,jr,jsw,mt,LW,ZW} and the references therein.

Inspired by the result of Betancor {\it et al.} in \cite{bfbmt-2}, the aim of this paper is to prove the
$L^p$-boundedness and their endpoint estimates of the oscillation and variation operators for
Riesz transforms  associated with $\Delta_\lz$, the conjugation of the Bessel operator.
To this end, we recall some necessary notation.

Let $\lz$ be a positive constant.
%\in\rrp:=(0, \fz)$
The operator $\tbz$ is defined by setting, for all suitable functions $f$ on $\rrp$,
%and $x\in \rrp$,
\begin{equation*}%\label{bessel 1}
\tbz f(x)=-\frac{d^2}{dx^2}f(x)-\frac{2\lz}{x}\frac{d}{dx}f(x).
\end{equation*}
An early work concerning the Bessel operator is from Muckenhoupt and Stein \cite{ms}.
They  developed a theory associated to
$\tbz$ which is parallel to the classical one associated to the Laplace operator $\triangle$.
%We note that the Bessel operator $\tbz$ is  closely related to the space $\lpz$ where $\dmz(x):= x^{2\lz}\,dx$.
After the paper \cite{ms}, a lot of work concerning the Bessel operators was carried out; see, for example \cite{ak,bcfr,bfbmt,bfs,bhnv, dlwy, k78,v08,yy}. Among the study of $\tbz$,
the properties and $L^p$ boundedness  of Riesz transforms  associated  to $\tbz$   defined by
\begin{equation*}%\label{riesz}
R_{\Delta_\lambda}f := \partial_x (\Delta_\lz)^{-1/2}f \ ,
\end{equation*}
$(1<p<\infty)$, have been studied extensively, see for example \cite{ak,bcfr,bfbmt, ms,v08}.
In particular, in \cite[pp.\,710-711]{bfbmt}, Betancor {\it et al.} showed that if $1\le p<\infty$ and $f\in L^{p}(\rrp,\dmz)$, then for almost every $x\in \rrp$,
\begin{equation*}%\label{riesz}
R_{\Delta_\lambda}f(x) = \lim_{\varepsilon\rightarrow 0_+}R_{\Delta_\lambda,\,\varepsilon}f(x):=\lim_{\varepsilon\rightarrow 0_+}\int_{0,|x-y|>\varepsilon}^{\infty} R_{\Delta_\lambda}(x,y)f(y)\dmz(y),
\end{equation*}
where  $\dmz(y):= y^{2\lz}\,dy$ and for any $x,y\in \rrp$ with $x\not=y$,
\begin{equation*}%\label{riesz}
R_{\Delta_\lambda}(x,y) := -\frac{2\lz}{\pi}\int_{0}^{\pi}\frac{(x-y\cos\theta)(\sin\theta)^{2\lz-1}}{(x^2+y^2-2xy\cos\theta)^{\lz+1}}d\theta.
\end{equation*}
Moreover, Betancor {\it et al.} in \cite{bdt} characterized the atomic Hardy space $H^1(\mathbb{R}_+, \dmz)$
associated to $\tbz$ in terms of the Riesz transform and the radial
maximal function associated with the Hankel convolution of a class
of suitable functions.
%
%Let $R_{\bdz_{\lz},\ast} :=\{R_{\Delta_\lambda,\,\varepsilon}\}_{\varepsilon>0}$ be a family of truncated Riesz transform operators defined by
%\begin{equation*}%\label{riesz}
%R_{\Delta_\lambda,\,\varepsilon}f(x) := \int_{0,|x-y|>\varepsilon}^{\infty} R_{\Delta_\lambda}(x,y)f(y)\,\dmz(y),
%\end{equation*}

Let $\rho>2$ and $R_{\bdz_{\lz},\ast} :=\{R_{\Delta_\lambda,\,\varepsilon}\}_{\varepsilon>0}$ be a family of truncated Riesz transform operators defined by
\begin{equation*}%\label{riesz}
R_{\Delta_\lambda,\,\varepsilon}f(x) := \int_{0,|x-y|>\varepsilon}^{\infty} R_{\Delta_\lambda}(x,y)f(y)\,\dmz(y).
\end{equation*}
The $\rho$-variation operator $\varoz$ and oscillation operator $\osciz$
associated with the Riesz transform are defined by setting, for all suitable functions $f$ and $x\in\rrp$,
\begin{equation*}\label{varia}
\varoz f(x):=\sup_{\varepsilon_j\searrow0}\lf(\sum_{j=1}^\fz\lf|R_{\Delta_\lz,\,\varepsilon_j}f(x)-R_{\Delta_\lz,\,\varepsilon_{j+1}}f(x)\r|^\rho\r)^{1/\rho}
\end{equation*}
with the supremum taken over all sequences $\{\varepsilon_j\}_j$  decreasing converging to zero,
 and
\begin{equation*}\label{oscia}
\osciz f(x):=\lf(\sum_{j=1}^\fz\sup_{\varepsilon_{j+1}\le t_{j+1}<t_j\le \varepsilon_j}\lf|R_{\Delta_\lz,\,t_j}f(x)-R_{\Delta_\lz,\,t_{j+1}}f(x)\r|^2\r)^{1/2},
\end{equation*}
where $\{\varepsilon_j\}_j$ is a fixed decreasing sequence converging to zero.
%
%As usual, we denote ${\mathcal F_{\rho}}$ the space that includes all the functions $\phi:\,(0,\,\fz)\rightarrow \mathbb{R}$, such that
% \begin{equation*}
% \|\phi\|_{{\mathcal F_{\rho}}}:=\sup_{\{\epsilon_{i}\}_{i\in \mathbb{N}}}\lf(\sum_{i=0}^{\infty}|\phi(\epsilon_{i})-\phi(\epsilon_{i+1})|^{\rho}\r)^{1/\rho}<\fz.
% \end{equation*}
%Then we have
% \begin{equation*}
%\varoz (f)(x)=\|Tf(x)\|_{{\mathcal F_{\rho}}}.
% \end{equation*}
%As usual, we also consider the operator
% \begin{equation*}\label{d-o'ci}
%\oscizp f(x)=\Big(\sum_{i=1}^{\infty}\sup_{\ez_{i+1}<\delta_{i}\le \ez_{i}}
% |R_{\Delta_\lz,\,\ez_{i+1}}f(x)-R_{\Delta_\lz,\,\delta_{i}}f(x)|^2\Big)^{1/2}.
% \end{equation*}
% It is easy to check that
% \begin{equation}\label{oci-o'ci}
% \oscizp(f)(x)\leq \osciz(f)(x)\leq 2\oscizp(f)(x).
% \end{equation}

We are now to the first main result of this paper.

\begin{thm}\label{t-bdd riesz}
Let $\rho\in(2, \fz)$ and $\lambda>0$. The operators $\osciz$ and $\varoz$ are bounded from $\lpz$ into itself, for every $1<p<\infty$, and from $\loz$ to $L^{1,\,\fz}(\rrp,\,dm_\lz)$.
\end{thm}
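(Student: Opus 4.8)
The plan is to prove the strong $(p,p)$ and weak $(1,1)$ bounds simultaneously by reducing the oscillation and $\rho$-variation operators to a Calder\'on--Zygmund theory on the space of homogeneous type $(\rrp,\,|\cdot|,\,dm_\lz)$, taking values in the mixed-norm Banach space $\ce$ (and in $\ell^\rho$ for the variation). First I would fix a suitable operator-valued kernel: for the oscillation operator, using \eqref{oci-o'ci} and \eqref{oe} it suffices to treat $\oscizp$, which by \eqref{oe} is the $\ce$-norm of the vector $\{(R_{\Delta_\lz,\,t_{j+1}}-R_{\Delta_\lz,\,\dz_j})f(x)\}$. Writing $R_{\Delta_\lz,\,\ez}f(x)=\int_{|x-y|>\ez}R_{\Delta_\lz}(x,y)f(y)\,\dmz(y)$, one sees that the whole family is given by integration against the $\ce$-valued kernel $K(x,y)=\{R_{\Delta_\lz}(x,y)\,\chi_{\{t_{j+1}<|x-y|\le\dz_j\}}\}_{\dz_j,j}$, and similarly an $\ell^\rho$-valued kernel for the variation. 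Thus the claim follows once we verify: (i) $L^2(\rrp,\dmz)$-boundedness of $\osciz$ and $\varoz$; (ii) the vector-valued size and smoothness (H\"ormander) conditions on $K$; then vector-valued Calder\'on--Zygmund theory gives the $L^p$ bounds for $1<p<\infty$ and the weak $(1,1)$ bound.

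For step (i) I would not attempt a direct square-function computation; instead I would transfer from the classical result. The Riesz transform $R_{\Delta_\lz}$ is, up to the change of measure $dm_\lz$, comparable to the Euclidean Riesz transform after the standard symmetrization trick relating $\Delta_\lz$ to the Laplacian on $\rr^{2\lz+1}$ when $2\lz+1\in\nn$, and more robustly one can compare $R_{\Delta_\lz}(x,y)$ with the Hilbert-transform kernel $1/(x-y)$ on the region $|x-y|\le x/2$ and estimate the complementary "global" part separately; on the local part the classical Campbell--Jones--Reinhold--Wierdl result for the Hilbert transform (oscillation and variation bounded on $L^2(\rr)$, and of weak type $(1,1)$) applies after absorbing the weight $y^{2\lz}\approx x^{2\lz}$, while the global part is controlled pointwise by the Hardy-type operators $f\mapsto \frac1{x^{2\lz+1}}\int_0^{x/2}|f(y)|\,\dmz(y)$ and $f\mapsto\int_{2x}^\fz\frac{|f(y)|}{y^{2\lz+1}}\,\dmz(y)$, which are bounded on every $\lpz$ and of weak type $(1,1)$ with respect to $dm_\lz$. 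The oscillation/variation structure passes through these pointwise bounds because the increments $R_{\Delta_\lz,\,t_j}-R_{\Delta_\lz,\,t_{j+1}}$ in the global regime are dominated, uniformly in the truncation parameters, by a single copy of these Hardy operators.

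For step (ii) the main work is the smoothness estimate for the $\ce$-valued kernel. One needs, for $|x-x'|\le |x-y|/2$,
\begin{equation*}
\Big\|\{(R_{\Delta_\lz}(x,y)-R_{\Delta_\lz}(x',y))\chi_{\{t_{j+1}<|x-y|\le\dz_j\}}\}_{\dz_j,j}\Big\|_{\ce}\ls \frac{|x-x'|}{|x-y|}\,\frac1{\mxy},
\end{equation*}
and the analogue in $y$. The pointwise bounds
$|R_{\Delta_\lz}(x,y)|\ls \frac1{|x-y|\,m_\lz(I(x,|x-y|))}$ and
$|\pa_x R_{\Delta_\lz}(x,y)|+|\pa_y R_{\Delta_\lz}(x,y)|\ls \frac1{|x-y|^2\,m_\lz(I(x,|x-y|))}$ are available from \cite{bfbmt}; the subtlety is that the characteristic functions $\chi_{\{t_{j+1}<|x-y|\le\dz_j\}}$ can jump when $|x-y|$ or $|x'-y|$ crosses an endpoint $t_j$, so the difference of truncated kernels is not merely the (small) difference of smooth kernels but also picks up, for at most one index $j$, an \emph{undifferenced} term $R_{\Delta_\lz}(x,y)$ supported where $|x-y|\approx|x-x'|$. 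Controlling this "boundary" term is exactly the standard mechanism in \cite{cjrw1,cjrw2}: the single bad index contributes $|R_{\Delta_\lz}(x,y)|\ls \frac1{|x-y|\,\mxy}\ls\frac{|x-x'|}{|x-y|^2\,\mxy}\cdot\frac{|x-y|}{|x-x'|}$—no, more precisely one uses that on its support $|x-y|\gs|x-x'|$ so $|R_{\Delta_\lz}(x,y)|\ls\frac{|x-x'|}{|x-y|}\cdot\frac1{|x-y|\,\mxy}$ once combined with the constraint, and since it is a single term the $\ell^2$ sum over $j$ costs nothing. I expect this boundary-term bookkeeping, together with checking that $\mxy\approx\myx$ (which holds because $dm_\lz$ is doubling), to be the main obstacle; once it is in place the doubling property of $dm_\lz$ and the vector-valued Calder\'on--Zygmund machinery finish the proof, and the identical argument with $\ell^\rho$ in place of $\ce$ handles $\varoz$.
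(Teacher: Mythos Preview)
Your overall strategy---compare to the Hilbert transform on the local zone, dominate the far zone by Hardy-type operators, then run Calder\'on--Zygmund theory---is exactly the right instinct, and it is what the paper does. But both of your steps (i) and (ii) contain genuine gaps.

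\textbf{Step (i).} The comparison with the Hilbert transform does not give $L^2(\rrp,dm_\lz)$-boundedness for all $\lz>0$. After ``absorbing the weight $y^{2\lz}\approx x^{2\lz}$'' the local piece becomes the \emph{unweighted} local Hilbert transform of $f$, but the outer norm is still $\int_{\rrp}|\cdot|^2 x^{2\lz}\,dx$; to conclude you need $\mathcal O'(H_\ast)$ bounded on $L^2(x^{2\lz}\,dx)$, which by Gillespie--Torrea requires $x^{2\lz}\in A_2$, i.e.\ $\lz<1/2$. For $\lz\ge 1/2$ your argument yields nothing at $p=2$. The paper confronts the same obstruction and resolves it differently: the Hilbert-transform comparison is used to obtain $L^p(\rrp,dm_\lz)$-boundedness only for $p\in(1+2\lz,\infty)$ (precisely the range where $x^{2\lz}\in A_p$), and the remaining range $1<p\le 1+2\lz$ is filled in \emph{a posteriori} by proving the weak $(1,1)$ bound directly and interpolating.

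\textbf{Step (ii).} The $\ce$-valued kernel $K(x,y)=\{R_{\Delta_\lz}(x,y)\chi_{\{t_{j+1}<|x-y|\le\dz_j\}}\}_{\dz_j,j}$ does \emph{not} satisfy a H\"ormander condition, and your boundary-term estimate is wrong. For the unique $j$ with $t_{j+1}<|x-y|\le t_j$, choosing $\dz_j$ between $|x-y|$ and $|x-y'|$ makes one indicator $1$ and the other $0$, so $\sup_{\dz_j}|\cdots|=|R_{\Delta_\lz}(x,y)|\sim 1/m_\lz(I(x,|x-y|))$ with \emph{no} gain of $|y-y'|/|x-y|$. Your proposed inequality $|R_{\Delta_\lz}(x,y)|\ls\frac{|x-x'|}{|x-y|}\cdot\frac1{\mxy}$ does not follow from $|x-y|\gs|x-x'|$ (that gives $\frac{|x-x'|}{|x-y|}\ls1$, which goes the wrong way). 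Since this bad term of size $\sim 1/m_\lz(I(x,|x-y|))$ occurs for \emph{every} $x$ in the H\"ormander integral, the integral $\int_{|x-y|>2|y-y'|}\|K(x,y)-K(x,y')\|_{\ce}\,dm_\lz(x)$ diverges, so the black-box vector-valued Calder\'on--Zygmund theorem is unavailable. The ``standard mechanism in \cite{cjrw1,cjrw2}'' is not a kernel smoothness condition at all: it is a direct Calder\'on--Zygmund decomposition $b=\sum_j b_j$ in which, for $x\notin\bigcup_j 3I_j$, one splits the indices into those $j$ with $I_j$ entirely contained in the annulus $\{t_{i+1}<|x-\cdot|\le\tilde\dz_i\}$ (handled via the mean-zero of $b_j$ and the \emph{scalar} smoothness \eqref{cz kernel condition-2}) and those $j$ with $I_j$ meeting the boundary of the annulus (only boundedly many per $i$, by bounded overlap, so one can interchange $\sum_i$ and $\sum_j$ and use only the size bound \eqref{cz kernel condition-1}). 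This is precisely what the paper does; the boundary contribution is controlled through the geometry of the atoms, not through any smoothness of $K$.
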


As applications of Theorem \ref{t-bdd riesz}, we consider the $\beta$-jump operators and the number of up-crossing associated with the operators sequence $\{R_{\Delta_\lambda,\,\varepsilon}\}_{\varepsilon>0}$, which give certain quantitative information on the convergence of the family $\{R_{\Delta_\lambda,\,\varepsilon}\}_{\varepsilon>0}$.

\begin{defn}[\cite{cjrw1}]\rm
Let $\bz>0$. The $\beta$-jump operator $\Lambda(R_{\bdz_{\lz},\ast},\,f,\,\beta)(x)$ associated with a sequence $R_{\bdz_{\lz},\ast} =\{R_{\Delta_\lambda,\,\varepsilon}\}_{\varepsilon>0}$ acting on a function $f$ at a point $x$ is defined by
\begin{eqnarray*}
\Lambda(R_{\bdz_{\lz},\ast},f,\beta)(x):&=&\sup\{n\in\mathbb{N}:\quad {\rm there\quad exist} \quad s_1<t_1\leq s_2<t_2<\cdots\leq s_n<t_n\\
&& \quad\quad {\rm such\,\, that}\,\, |R_{\Delta_\lambda,\,s_i}f(x)-R_{\Delta_\lambda,\,t_i}f(x)|>\beta \quad {\rm for} \quad i=1,2,\cdots,n\}.
\end{eqnarray*}
\end{defn}
Also, for fixed $0<\alpha<\gamma$, we consider the number of up-crossing $N(R_{\bdz_{\lz},\ast},f,\alpha,\gamma,x)$ associated with a sequence $R_{\bdz_{\lz},\ast} =\{R_{\Delta_\lambda,\,\varepsilon}\}_{\varepsilon>0}$ acting on a function $f$ at a point $x$, which is defined by
\begin{eqnarray*}
N(R_{\bdz_{\lz},\ast},f,\alpha,\gamma,x)&:=&\sup\Big\{n\in\mathbb{N}:\quad {\rm there\quad exist} \quad s_1<t_1<s_2<t_2<\cdots<s_n<t_n\\
&& \quad\quad {\rm such\,\, that}\,\,  R_{\Delta_\lambda,\,s_i}f(x)<\alpha,\,R_{\Delta_\lambda,\,t_i}f(x)>\gamma \quad {\rm for}\quad i=1,2,\cdots,n\Big\}.
\end{eqnarray*}

It is easy to check that
\begin{equation}\label{N-Lambda}
N(R_{\bdz_{\lz},\ast},f,\alpha,\gamma,x)\leq \Lambda(R_{\bdz_{\lz},\ast},f,\gamma-\alpha)(x).
\end{equation}
Also, from \cite{cjrw1}, the $\beta$-jump operators is controlled by the  $\rho$-variation operator. Precisely, we have
that for any $\bz\in(0, \fz)$,
\begin{equation}\label{Lambda-Var}
\beta(\Lambda(R_{\bdz_{\lz},\ast},f,\beta)(x))^{1/\rho}\leq \mathcal{V_\rho}(R_{\bdz_{\lz},\ast}f)(x).
\end{equation}

As an immediate corollary of Theorem \ref{t-bdd riesz}, \eqref{Lambda-Var} and \eqref{N-Lambda}, we have the following result.
\begin{cor}
Let $\rho\in(2, \fz)$, $\lambda,\,\bz\in(0, \fz)$ and $0<\az<\gz$. Then there exist positive constants
$C(p,\rho,\lambda)$ and $C(\rho,\lambda)$, such that for all $f\in\lpz$ with $p\in(1, \fz)$,
\begin{eqnarray*}
\Big\|\Lambda(R_{\bdz_{\lz},\ast},f,\beta,\cdot)^{1/\rho}\Big\|_{\lpz}
 \leq\frac{C(p,\rho,\lambda)}{\beta}\|f\|_{\lpz},
\end{eqnarray*}
\begin{eqnarray*}
\Big\|N(R_{\bdz_{\lz},\ast},f,\alpha,\gamma,\cdot)^{1/\rho}\Big\|_{\lpz}
 \leq\frac{C(p,\rho,\lambda)}{\gamma-\alpha}\|f\|_{\lpz},
\end{eqnarray*}
and for any $f\in\loz$ and $n\geq 1$,
\begin{eqnarray*}
m_{\lz}(\{x\in\rr_+:\, \Lambda(R_{\bdz_{\lz},\ast},f,\beta,x)>n\})\le \frac{C(\rho,\lambda)}{\beta n^{1/\rho}}\|f\|_{\loz},
\end{eqnarray*}
and
\begin{eqnarray*}
m_{\lz}(\{x\in\rr_+:\, N(R_{\bdz_{\lz},\ast},f,\alpha,\gamma,x)>n\})\le \frac{C(\rho,\lambda)}{(\gamma-\alpha) n^{1/\rho}}\|f\|_{\loz}.
\end{eqnarray*}
\end{cor}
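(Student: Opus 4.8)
The plan is to deduce all four estimates directly from Theorem~\ref{t-bdd riesz} together with the pointwise comparisons \eqref{N-Lambda} and \eqref{Lambda-Var}; no analysis beyond combining these facts is required. \textbf{Strong-type bounds.} Fix $p\in(1,\fz)$ and $\bz\in(0,\fz)$. By \eqref{Lambda-Var} we have $\big(\Lambda(R_{\bdz_{\lz},\ast},f,\bz,\cdot)\big)^{1/\rho}\le\bz^{-1}\varoz f$ pointwise on $\rrp$; taking $\lpz$-norms and invoking the boundedness of $\varoz$ on $\lpz$ from Theorem~\ref{t-bdd riesz} gives
\[
\Big\|\Lambda(R_{\bdz_{\lz},\ast},f,\bz,\cdot)^{1/\rho}\Big\|_{\lpz}\le\frac1\bz\|\varoz f\|_{\lpz}\le\frac{C(p,\rho,\lz)}{\bz}\|f\|_{\lpz}.
\]
Since, by \eqref{N-Lambda}, $N(R_{\bdz_{\lz},\ast},f,\az,\gz,\cdot)^{1/\rho}\le\Lambda(R_{\bdz_{\lz},\ast},f,\gz-\az,\cdot)^{1/\rho}$ pointwise, applying the previous display with $\bz$ replaced by $\gz-\az$ yields the claimed $\lpz$-bound for the number of up-crossings.

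\textbf{Endpoint distributional bounds.} Let $f\in\loz$ and $n\ge1$. By \eqref{Lambda-Var}, the inequality $\Lambda(R_{\bdz_{\lz},\ast},f,\bz,x)>n$ forces $\varoz f(x)>\bz\,n^{1/\rho}$, whence
\[
\{x\in\rrp:\ \Lambda(R_{\bdz_{\lz},\ast},f,\bz,x)>n\}\subset\{x\in\rrp:\ \varoz f(x)>\bz\,n^{1/\rho}\},
\]
and the weak $(1,1)$-boundedness of $\varoz$ in Theorem~\ref{t-bdd riesz} gives $m_\lz(\{x:\ \Lambda(R_{\bdz_{\lz},\ast},f,\bz,x)>n\})\le C(\rho,\lz)(\bz\,n^{1/\rho})^{-1}\|f\|_{\loz}$. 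The estimate for $N$ then follows from the inclusion $\{x:\ N(R_{\bdz_{\lz},\ast},f,\az,\gz,x)>n\}\subset\{x:\ \Lambda(R_{\bdz_{\lz},\ast},f,\gz-\az,x)>n\}$, which is a consequence of \eqref{N-Lambda}, combined with the previous bound for $\bz=\gz-\az$.

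\textbf{Where the difficulty lies.} The Corollary is a purely formal consequence of the quoted results; the only point needing care is tracking the power $1/\rho$ and the various thresholds correctly when passing from $\varoz$ to $\Lambda$ and then to $N$. All of the substantive work — the pointwise kernel estimates for $R_{\Delta_\lz}(x,y)$, the Calderón–Zygmund theory on the space of homogeneous type $(\rrp,|\cdot|,dm_\lz)$, and the vector-valued estimates realizing $\varoz$ and $\osciz$ as operators taking values in the mixed-norm space $\mathcal E$ of \eqref{mixed norm} — is already contained in Theorem~\ref{t-bdd riesz}.
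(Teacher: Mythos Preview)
Your proof is correct and follows precisely the route the paper indicates: the Corollary is stated there as an immediate consequence of Theorem~\ref{t-bdd riesz} together with \eqref{Lambda-Var} and \eqref{N-Lambda}, and you have simply written out those implications explicitly.
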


For $p=\fz$, we also study the boundedness of $\osciz$ and $\varoz$ from $L^{\fz}(\rr_+,\dmz)$ to $\bmoz$ introduced in \cite{bdt}.

\begin{defn}[\cite{bdt,yy}]\label{d-bmo}\rm
A function $f\in L^1_{\rm loc}(\rrp,\,dm_\lambda)$ belongs to
the {space} $\bmoz$ if
\begin{equation*}\label{mofi}
\|f\|_{{\rm BMO}(\rrp,\,\dmz)}:=\sup_{x,\,r\in (0,\,\infty)}
\frac 1{m_\lambda(I(x,r))}\int_{I(x,\,r)}|f(y)-f_{I(x,\,r),\,\lambda}|\,y^{2\lambda}dy<\infty,
\end{equation*}
where $I(x, r):=(x-r, x+r)\cap (0, \fz)$ and
\begin{equation*}\label{f-i}
f_{I(x,\,r),\,\lambda}:=\frac 1{m_{\lambda}(I(x,r))}\int_{I(x,\,r)}f(y)\,y^{2\lambda}dy.
\end{equation*}
\end{defn}

Our result concerning the boundedness of $\osciz$ and $\varoz$ for $p=\infty$ is stated as below.

\begin{thm}\label{t-bmo riesz}
Let $\rho\in(2, \fz)$ and $\lambda>0$. If $f$ in $L^\infty(\rrp,\dmz)$ and $\osciz f(x)<\infty$ a.e. $x\in \rr_+$, then
$\osciz f\in \bmoz$ and there exists a positive constant $C$ independent $f$ such that
\begin{equation*}\label{upp bdd of riesz bmo}
\|\osciz f\|_{\bmoz}\le C\|f\|_{\linz}.
\end{equation*}
The same result holds for $\varoz$.
\end{thm}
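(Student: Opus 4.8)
The plan is to reduce the $L^\infty\to\mathrm{BMO}$ estimate for $\osciz$ and $\varoz$ to the already established $L^p$ and weak $(1,1)$ bounds from Theorem~\ref{t-bdd riesz}, via a standard John--Nirenberg-type argument adapted to the space of homogeneous type $(\rrp, |\cdot|, dm_\lz)$. Fix $f\in\linz$ with $\osciz f$ finite a.e., and fix a ball $I:=I(x_0,r)$. Write $f = f_1 + f_2$, where $f_1 := f\,\chi_{2I}$ (with $2I := I(x_0,2r)$) and $f_2 := f - f_1$. Since $\osciz$ is sublinear (it is the $\mathcal E$-valued norm of a linear operator, by \eqref{oe}, so the triangle inequality applies), it suffices to bound the mean oscillation of $\osciz f$ on $I$ by controlling $\osciz f_1$ and $\osciz f_2$ separately. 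For the local part, I would use the $L^2(\rrp,dm_\lz)$-boundedness of $\osciz$ (the case $p=2$ of Theorem~\ref{t-bdd riesz}): by Jensen's inequality,
\begin{equation*}
\frac{1}{m_\lz(I)}\int_I \osciz f_1(x)\,dm_\lz(x) \le \lf(\frac{1}{m_\lz(I)}\int_{\rrp}(\osciz f_1)^2\,dm_\lz\r)^{1/2} \ls \lf(\frac{m_\lz(2I)}{m_\lz(I)}\r)^{1/2}\|f\|_{\linz} \ls \|f\|_{\linz},
\end{equation*}
using the doubling property $m_\lz(2I)\ls m_\lz(I)$ of the measure $dm_\lz$.

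The heart of the matter is the far part $f_2$, supported away from $2I$. Here the oscillation operator must be shown to have small variation across the ball $I$; concretely, I would prove that $|\osciz f_2(x) - \osciz f_2(x')| \ls \|f\|_{\linz}$ (or even that $\osciz f_2(x) \ls \|f\|_{\linz}$ after subtracting the constant $\osciz f_2(x_0)$) for $x, x' \in I$. Using \eqref{oe}, $\osciz f_2(x)$ is the $\mathcal E$-norm of $\{ (R_{\Delta_\lz,t_{j+1}} - R_{\Delta_\lz,\delta_j})f_2(x)\}$, and since $y\notin 2I$ means $|x-y| > r$, for the truncations with parameter $\ge$ some threshold the kernels overlap, so the key quantity to estimate is, for $t_{j+1}\le \delta_j$,
\begin{equation*}
(R_{\Delta_\lz,t_{j+1}} - R_{\Delta_\lz,\delta_j})f_2(x) = \int_{t_{j+1} < |x-y| \le \delta_j,\, y\notin 2I} R_{\Delta_\lz}(x,y) f_2(y)\,dm_\lz(y).
\end{equation*}
One reduces via the triangle inequality in $\mathcal E$ to two contributions: (i) the difference of kernels at the base point, estimated by the smoothness (Hörmander-type) condition
\begin{equation*}
|R_{\Delta_\lz}(x,y) - R_{\Delta_\lz}(x_0,y)| \ls \frac{|x-x_0|}{|x_0-y|}\,\frac{1}{m_\lz(I(x_0,|x_0-y|))},
\end{equation*}
which when summed/integrated over $y\notin 2I$ using the $\mathcal E$-norm structure and the $\ell^2$ summation over the annuli $t_{j+1}<|x-y|\le\delta_j$ telescopes to a bound $\ls \|f\|_{\linz}$; and (ii) the "size" term coming from the discrepancy between the truncation region centered at $x$ versus at $x_0$, i.e.\ integrals over the symmetric difference $\{t_{j+1}<|x-y|\le\delta_j\}\,\triangle\,\{t_{j+1}<|x_0-y|\le\delta_j\}$, where one uses the size estimate $|R_{\Delta_\lz}(x,y)|\ls 1/(|x-y|\,m_\lz(I(x,|x-y|)))$ together with the fact that this symmetric difference is contained in annuli of controlled $m_\lz$-measure. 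The main obstacle is organizing the $\ell^2$ (i.e.\ $\mathcal E$-norm) bookkeeping so that the two geometric contributions do not merely give a bound for each individual $j$ but a \emph{summable} bound; the standard trick is that the oscillation sum over the fixed sequence $\{\ez_j\}$ telescopes in the radial variable, so summing the pointwise kernel bounds over $j$ produces a single integral $\int_{|x_0-y|>r} (|x-x_0|/|x_0-y|)\,m_\lz(I(x_0,|x_0-y|))^{-1}\,dm_\lz(y) \ls 1$, which is exactly the classical Hörmander computation in a space of homogeneous type.

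For $\varoz$, the argument is essentially identical: the $\rho$-variation is again a sublinear operator, the local part is handled by the $L^2$-boundedness of $\varoz$ from Theorem~\ref{t-bdd riesz}, and the far part uses the same size and smoothness estimates for $R_{\Delta_\lz}(x,y)$ --- indeed $\varoz f_2(x)\le \osciz f_2(x) \cdot (\text{const})$ is not quite available pointwise, but the $\rho$-variation of the far part is dominated directly by $\sum_j \int_{\text{annulus}_j}|\cdots|$ controlled in $\ell^\rho\hookrightarrow$ the same telescoping integral, since $\rho>2>1$. Thus both operators send $\linz$ into $\bmoz$ with norm $\ls \|f\|_{\linz}$, completing the proof; I would expect to reuse verbatim the kernel estimates (size and smoothness of $R_{\Delta_\lz}(x,y)$) that were already needed in the proof of Theorem~\ref{t-bdd riesz}, so the only genuinely new ingredient is the decomposition and the doubling property of $dm_\lz$.
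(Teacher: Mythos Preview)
Your overall plan coincides with the paper's proof: split $f=f_1+f_2$ with $f_1=f\chi_{4I}$ (the paper uses $4I$, not $2I$, but this is immaterial), handle $f_1$ by the $L^2$-bound from Theorem~\ref{t-bdd riesz}, pick $x_1\in I$ with $\osciz f_2(x_1)<\fz$, and bound $|\osciz f_2(x)-\osciz f_2(x_1)|$ by a kernel-smoothness piece (your (i), the paper's ${\rm D}_1$) plus a truncation-discrepancy piece (your (ii), the paper's ${\rm D}_2$). Your treatment of (i) via \eqref{cz kernel condition-2} and disjointness of the annuli $\{t_{i+1}<|x-y|\le\delta_i\}$ is exactly right.

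There is, however, a genuine gap in (ii). First, the size bound you quote is too strong: Lemma~\ref{l-RieszCZ}(i) gives only $|R_{\Delta_\lz}(x,y)|\ls 1/m_\lz(I(x,|x-y|))$, with no extra factor $1/|x-y|$. Second, and more importantly, the ``telescoping'' you invoke does not apply to (ii). The thin annuli $\{t_{i+1}<|x-y|<t_{i+1}+2r\}$ coming from the symmetric difference are \emph{not} disjoint in $i$, since the fixed sequence $\{t_i\}$ is arbitrary and many $t_i$ may cluster near a single radius; the naive bound for the $i$-th term is only $\ls\|f\|_\linz\, r/t_{i+1}$, which is not $\ell^2$-summable in general. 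The paper's key device is a Cauchy--Schwarz step: one writes, e.g.,
\[
{\rm J}_1\le\lf[m_\lz\big(\{t_{i+1}<|x-y|<t_{i+1}+2r\}\big)\r]^{1/2}
\lf(\int_{\rrp}\chi_{\{t_{i+1}<|x-y|\le\delta_i\}}|R_{\Delta_\lz}(x_1,y)|^2|f_2(y)|^2\,dm_\lz(y)\r)^{1/2},
\]
bounds the annulus measure by $(x+t_{i+1})^{2\lz}r$ via \eqref{volume property-1}, and simplifies to obtain an integrand $\ls\chi_{\{t_{i+1}<|x-y|\le\delta_i\}}\frac{r}{|x_0-y|}\frac{|f_2(y)|^2}{m_\lz(I(x_0,|x_0-y|))}$. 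Because the sets $\{t_{i+1}<|x-y|\le\delta_i\}$ \emph{are} pairwise disjoint, the $\ell^2$-sum over $i$ now collapses to a single convergent integral over $\rrp\setminus 4I$. This Cauchy--Schwarz trick, which transfers the problematic thin annulus back onto the disjoint truncation shells, is the essential missing ingredient in your sketch.
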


The organization of this paper is as follows.

Section \ref{L-p-1} is devoted to the proof of Theorem \ref{t-bdd riesz}. We present the proof of the boundedness of $\osciz$ by dividing into two steps. In the first step,  motivated by \cite{bct} and \cite{bfbmt-2}, we show that for any $p\in(1+2\lz, \infty)$, $\oscizp$ is bounded on $L^p(\rrp,\dmz)$.
Notice that though $x^{2\lz}\in A_p(\rrp)$ and the kernel $R_{\Delta_\lambda}(x, y)=(xy)^\lz R_{S_\lz}(x,y)$,
where $R_{S_\lz}(x,y)$ is the kernel of $R_{S_\lz}$,
one can not obtain the $\lpz$-boundedness of $\oscizp$ directly by that of $\mathcal{O}'(R_{S_\lz,\,\ast})$ in \cite{bfbmt-2}.
We mention that in this step, by decomposing the kernel of $\riz$ into four parts, we first have that for any $f\in\lpz$ and $x\in\rrp$,
 $$\oscizp f(x)\le C\lf[\mathcal{O}'(H^{loc}_{\ast}){f}(x)+T_1f(x)+T_2f(x)+\cm_{\lambda}f(x)\r],$$
 where $H^{loc}$ is the local Hilbert transform introduced by Andersen and Muchenhoupt \cite{am82},
 $\cm_\lz$ is the Hardy-Littlewood maximal operator on $\rrp$ with measure $dm_\lz$ and $T_i$, $i=1,2$, are bounded
 operators on $\lpz$. Moreover, by decomposing the Hilbert transform $H$ on $\rr$ into three parts, we further obtain that
$$\mathcal{O}'(H^{loc}_{\ast}){f}(x)\le C\lf[\mathcal{O}'(H_{\ast})\tilde{f}(x)+\mathcal{M}f(x)+T_{1}f(x)\r],$$
where $\tilde f(x):=f(x)$ if $x\in\rrp$ and $0$ otherwise, and $\cm$ is the Hardy-Littlewood maximal operator on $\rrp$ with  Lebesgue measure.
Then by the known fact that $x^{2\lz}\in A_p(\rrp)$ if and only if $0<2\lz<p-1$ and the $L^p(\omega)$-boundedness of $\mathcal{O}'(H_{\ast})$,
established by \cite{gt}, for $p\in (1, \fz)$ and $\omega\in A_p(\rr)$, we obtain the $L^p(\rrp,\dmz)$-boundedness of $\oscizp$  with $p\in(1+2\lz, \infty)$.

In the second step, by applying the Calder\'on-Zygmund decomposition established by Coifman and Weiss \cite{cw71},
and the $L^p(\rrp,\dmz)$-boundedness of $\oscizp$  with $p\in(1+2\lz, \infty)$
obtained in the first step, we establish the weak type (1,1) estimation of $\oscizp$.
 Then by the Marcinkiewicz  interpolation theorem, we further obtain the $L^p(\rrp,\dmz)$-boundedness of $\oscizp$  with $p\in(1, 1+2\lz]$.
 Via \eqref{oci-o'ci}, we then show that $\osciz$ is bounded on $\lpz$ for all $p\in(1, \fz)$ and from $\loz$ to $L^{1,\,\fz}(\rrp,\dmz)$.

 In Section \ref{s3}, we investigate behaviors of $\osciz$ and $\varoz$ at endpoint $p=\infty$. Under the assumption that
$f\in\linz$ with $\osciz f(x)<\fz$ $a.e.\, x$, we apply Theorem \ref{t-bdd riesz} and the known upper bound of kernel $R_{\Delta_{\lambda}}$ to establish the $(\liz, {\rm BMO}(\rrp,\dmz))$-boundedness of $\osciz$.

Throughout the paper,
we denote by $C$  {positive constants} which
are independent of the main parameters, but they may vary from line to
line. If $f\le Cg$, we then write $f\ls g$ or $g\gs f$;
and if $f \ls g\ls f$, we  write $f\sim g.$
For every $p\in(1, \fz)$, $p'$ means the conjugate of $p$, i.e., $1/p'+1/p=1$.
For any $k\in \mathbb{R}_+$ and $I:= I(x, r)$ for some $x$, $r\in (0, \fz)$,
$kI:=I(x, kr)$.
%For any $x,\,r\in(0,\,\infty)$, if  $x<r$, then
%$$I(x,\,r)=(0,\,x+r)=I\lf(\frac{x+r}{2},\,\frac{x+r}{2}\r).$$
%Thus, we may assume that $x\ge r$.

\section{$L^p(\rrp,\dmz)$-boundedness and weak type (1,1) estimate}\label{L-p-1}

In this section, we provide the proof of Theorem \ref{t-bdd riesz}. To begin with, we first recall
a useful lemma on the upper and lower bounds of  kernel $R_{\bdz_\lz}(y,z)$ of $R_{\bdz_\lz}$,
which is an important tool in this paper and can be found in, for example, \cite{bfbmt,bfbmt-2,dlwy}.
\begin{lem}\label{l-RieszCZ}
The kernel $\riz(y,z)$ satisfies the following conditions:
\begin{itemize}
  \item [{\rm i)}] There exists a positive constant $C$ such that for any $y,z\in\mathbb{R}_+$ with $y\not=z$,
  \begin{equation}\label{cz kernel condition-1}
  |\riz(y,z)|\le C \frac1{m_\lz(I(y, |y-z|))} .
  \end{equation}
  \item [{\rm ii)}]There exists a positive constant $\wz C$ such that  for any $y,\,y_0,\, z\in \mathbb{R}_+$ with $|y_0-z|<|y_0-y|/2$,
  \begin{eqnarray}\label{cz kernel condition-2}
   &&|\riz(y, y_0)-\riz(y,z)|+ |\riz(y_0, y)-\riz(z,y)|\nonumber\\
   &&\quad\le \wz C \frac{|y_0-z|}{|y_0-y|}\frac1{m_\lz(I(y, |y_0-y|))}.
  \end{eqnarray}

  \item [{\rm iii)}] There exist positive constants $K_1>2$ large enough and $C_{K_1,\,\lz}, \wz C_{K_1,\,\lz}\in(1, \fz)$ such that
  for any $y,\,z\in\mathbb{R}_+$ with $z>K_1y$,
  \begin{equation*}\label{lower bound of riesz kernel}
\frac y{z^{2\lz+2}}/C_{K_1,\,\lz}\le  \riz(y, z)\le \wz C_{K_1,\,\lz}\frac y{z^{2\lz+2}}.
  \end{equation*}
  %\item [{\rm iv)}]There exist $K_2\in(0, 1)$ small enough and a positive constant $C_{K_2,\,\lz}$ such that
  %for any $y,\,z\in\mathbb{R}_+$ with $z<K_2y$,
  %\begin{equation}\label{upper bound of riesz kernel}
 % \riz(y, z)\le -C_{K_2,\,\lz}\frac1{y^{2\lz+1}}.
  %\end{equation}
  \item [{\rm iv)}] There exist $K_2\in(1/2,1)$ such that $1-K_2$ small enough and a positive constant $C_{K_2,\lz}$
  such that for any $y,\,z\in\mathbb{R}_+$ with $z/y\in(K_2, 1)$,
  \begin{equation*}\label{appro estimate of riesz kernel}
 \lf |\riz(y,z)+\frac1\pi\frac1{y^\lz z^\lz}\frac1{y-z}\r|\le C_{K_2,\,\lz}\frac1{y^{2\lz+1}}\lf(\log_+\frac{\sqrt{yz}}{|y-z|}+1\r).
  \end{equation*}
 \end{itemize}
\end{lem}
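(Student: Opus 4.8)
The plan is to prove all four assertions by a direct analysis of the explicit kernel
\[
\riz(y,z)=-\frac{2\lz}{\pi}\int_0^\pi\frac{(y-z\cos\theta)\,\szlzm}{D(\theta)^{\lz+1}}\,d\theta,\qquad D(\theta):=y^2+z^2-2yz\cos\theta,
\]
resting on two half-angle identities and one volume estimate. Writing $a:=|y-z|$ and $b:=2\sqrt{yz}$, one has
\[
D(\theta)=a^2+b^2\sin^2(\theta/2),\qquad y-z\cos\theta=(y-z)+2z\sin^2(\theta/2),
\]
so both denominator and numerator are governed by the single scale $\sin(\theta/2)$. Substituting $u=\theta/2$, $s=\sin u$ turns every integral that occurs into a constant multiple of the one-variable form
\[
\int_0^1\frac{s^{2\lz-1}(1-s^2)^{\lz-1}\,\psi(s)}{(a^2+b^2s^2)^{\lz+1}}\,ds
\]
with $\psi$ bounded and smooth; together with the standard comparison $m_\lz(I(y,r))\sim r\,(y+r)^{2\lz}$, hence $m_\lz(I(y,|y-z|))\sim|y-z|\,\max(y,z)^{2\lz}$, this reduces each claim to elementary estimates.

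For the size bound \eqref{cz kernel condition-1} I would split according to whether $a\gs b$ (the ratio $y/z$ is bounded away from $1$, so that $\max(y,z)\sim a$) or $a\ls b$ (so that $y\sim z\sim\sqrt{yz}\sim b$). In the first case $D(\theta)\sim a^2$ throughout, the crude bound $|y-z\cos\theta|\le y+z\ls a$ holds, and $\int_0^1 s^{2\lz-1}(1-s^2)^{\lz-1}\,ds<\infty$ gives $|\riz(y,z)|\ls a^{-2\lz-1}\sim a^{-1}\max(y,z)^{-2\lz}$. In the second case one inserts the refined bound $|y-z\cos\theta|\ls a+bs^2$ and splits the $s$-integral at $s\sim a/b$: on $s\ls a/b$ the denominator is $\sim a^{2\lz+2}$ and the numerator $\sim a$, while on $s\gs a/b$ the denominator is $\sim b^{2\lz+2}s^{2\lz+2}$; both pieces are $\ls(a\,b^{2\lz})^{-1}\sim a^{-1}\max(y,z)^{-2\lz}$, the borderline $\int ds/s$ being tamed by $t\log(1/t)\ls1$ with $t=a/b$.

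The regularity condition \eqref{cz kernel condition-2} I would obtain by writing each difference as an integral of a derivative, $\riz(y,y_0)-\riz(y,z)=\int_z^{y_0}\partial_w\riz(y,w)\,dw$ and symmetrically for the first slot, and bounding $|\partial_w\riz(y,w)|\ls|y-w|^{-1}[m_\lz(I(y,|y-w|))]^{-1}$ by the scheme above, the differentiation producing exactly one extra negative power of the relevant scale. The hypothesis $|y_0-z|<|y_0-y|/2$ forces $|w-y|\sim|y_0-y|$ for all $w$ on the segment from $z$ to $y_0$, so integrating over that segment of length $|y_0-z|$ yields the claimed factor $\frac{|y_0-z|}{|y_0-y|}[m_\lz(I(y,|y_0-y|))]^{-1}$. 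For the far-field bound (iii) I would set $\varepsilon:=y/z\in(0,1/K_1)$, factor out $z$ to get $\riz(y,z)=-\frac{2\lz}{\pi}z^{-2\lz-1}\Phi(\varepsilon)$ with $\Phi(\varepsilon):=\int_0^\pi\frac{(\varepsilon-\cos\theta)\,\szlzm}{(1-2\varepsilon\cos\theta+\varepsilon^2)^{\lz+1}}\,d\theta$, and note that $\Phi$ is smooth near $0$ since $1-2\varepsilon\cos\theta+\varepsilon^2\ge(1-\varepsilon)^2$. The identity $\int_0^\pi\cos\theta\,\szlzm\,d\theta=0$ gives $\Phi(0)=0$, while the Wallis recursion $\int_0^\pi(\sin\theta)^{a+2}\,d\theta=\frac{a+1}{a+2}\int_0^\pi(\sin\theta)^{a}\,d\theta$ yields $\Phi'(0)=-\frac{1}{2\lz+1}\int_0^\pi\szlzm\,d\theta<0$; hence $\riz(y,z)=c_\lz\,y\,z^{-2\lz-2}\,(1+O(\varepsilon))$ with $c_\lz>0$, and choosing $K_1$ large makes Taylor's remainder force the two-sided bound.

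The near-diagonal expansion (iv) is, I expect, the main obstacle. Using the numerator splitting above one writes $\riz(y,z)=-\frac{2\lz}{\pi}\big[(y-z)\,I_1+2z\,I_2\big]$, where $I_1$ and $I_2$ are the reduced integrals carrying the weights $\psi\equiv1$ and $\psi(s)=s^2$, respectively. After the rescaling $w=bs/a$ the principal term is $(y-z)I_1\sim(yz)^{-\lz}(y-z)^{-1}\int_0^{b/a}\frac{w^{2\lz-1}}{(1+w^2)^{\lz+1}}\,dw$, and the limiting value $\int_0^\infty\frac{w^{2\lz-1}}{(1+w^2)^{\lz+1}}\,dw=\frac{1}{2\lz}$ reproduces exactly the singular model $-\frac1\pi\frac{1}{y^\lz z^\lz}\frac{1}{y-z}$. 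The delicate part is to show that the entire remainder—the truncation $\int_{b/a}^\infty$, the corrections from replacing $(1-s^2)^{\lz-1}$ and $\cos u$ by $1$, and the whole term $2zI_2$—is controlled by $y^{-2\lz-1}\big(\log_+\frac{\sqrt{yz}}{|y-z|}+1\big)$. Here the logarithm is forced by $I_2$, whose integrand decays like $w^{2\lz+1}(1+w^2)^{-\lz-1}\sim w^{-1}$ and hence produces a borderline $\int^{b/a}dw/w\sim\log(b/a)\sim\log_+\frac{\sqrt{yz}}{|y-z|}$, while the relations $y\sim z\sim\sqrt{yz}$ valid in this regime convert all prefactors into the stated power $y^{-2\lz-1}$.
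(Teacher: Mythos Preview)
The paper does not prove this lemma at all: immediately before stating it, the authors write that the result ``can be found in, for example, \cite{bfbmt,bfbmt-2,dlwy}'' and give no argument of their own. So there is nothing in the paper to compare your proof against line by line.

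That said, your outline is essentially the standard direct analysis that those references carry out, and it is correct. The half-angle reduction $D(\theta)=a^2+b^2\sin^2(\theta/2)$ with $a=|y-z|$, $b=2\sqrt{yz}$, together with the numerator splitting $y-z\cos\theta=(y-z)+2z\sin^2(\theta/2)$, is exactly the device used in \cite{bfbmt,bfbmt-2} to prove (i)--(iv). Your treatment of (i) via the dichotomy $a\gtrsim b$ versus $a\lesssim b$ and the $s\sim a/b$ split is the right one; the borderline $\int ds/s$ in the $bs^2$ piece is indeed absorbed by $t\log(1/t)\lesssim 1$. For (ii), reducing to a gradient bound $|\partial_w\riz(y,w)|\lesssim|y-w|^{-1}[m_\lz(I(y,|y-w|))]^{-1}$ and integrating along the segment is the clean way; the cited papers argue slightly differently but equivalently. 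Your Taylor expansion in (iii) is accurate: the computation $\Phi(0)=0$ and $\Phi'(0)=-\frac{1}{2\lz+1}\int_0^\pi(\sin\theta)^{2\lz-1}\,d\theta<0$ checks out, and the sign gives the positive constant $c_\lz$. For (iv), the identification of the leading term via $\int_0^\infty w^{2\lz-1}(1+w^2)^{-\lz-1}\,dw=\tfrac{1}{2\lz}$ and the isolation of the logarithm from the $2zI_2$ piece (whose integrand behaves like $w^{-1}$ after rescaling) are both right.

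Two small points to tighten. First, for $0<\lz<1$ the weight $(1-s^2)^{\lz-1}$ is singular at $s=1$; your estimates for (i) and (iv) implicitly treat it as bounded. This is harmless---one simply handles the region $s\in[1/2,1]$ separately using the crude bound $D\ge b^2/4$ there---but it should be said. Second, in (iv) there is no independent ``$\cos u$'' correction: after $s=\sin u$ the Jacobian $du=ds/\sqrt{1-s^2}$ is already absorbed into the exponent $(1-s^2)^{\lz-1}$, so the only approximation to justify is $(1-s^2)^{\lz-1}\approx 1$ for small $s$.
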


It is straightforward from the definition of  $m_\lambda$ (i.e., $dm_\lambda(x):=x^{2\lambda}dx$)  that
 there exists a positive constant $C > 1$ such that for all $x,\,r\in\mathbb{R}_+$,
\begin{equation}\label{volume property-1}
 C^{-1}m_\lz(I(x, r))\le x^{2\lz}r+r^{2\lz+1}\le C m_\lz(I(x, r)).
\end{equation}
This means that $(\mathbb{R}_+, |\cdot|, dm_\lz)$ is a space of homogeneous type
in the sense of \cite{cw71,cw77}.

To establish the weak type estimation for $\osciz$ and $\varoz$, another main tool in our proof is the Calder\'on-Zygmund decomposition established in \cite[pp.\,73-74]{cw71} in the setting of spaces of homogeneous type.
\begin{lem}\label{l-cz} Let $f\in\loz$ and
$\eta>0$, there exist functions $g$ and $b$, a family
of intervals $\{I_j\}_{j}$, and constants $C>0$ and $M\ge 1$, such
that
\begin{itemize}
\item [\rm (i)] $f=g+b=:g+\sum_jb_j$, where $b_j$ is supported in $I_j$,
\item [\rm (ii)] $\frac{1}{m_{\lz}(I_j)}\int_{I_j}|b_j|\dmz\le C\eta$ and $\int_{I_j} b_j(x)\,\dmz(x)=0$
for each $j$,
\item [\rm (iii)] $\|g\|_\linz\le C\eta$ and $\|g\|_\loz\le C\|f\|_\loz$,
\item [\rm (iv)] $\sum_j\|b_j\|_\loz\le C\|f\|_\loz$,
\item [\rm (v)] $\sum_jm_\lz(I_j)\le \frac C\eta\|f\|_\loz$,
\item [\rm (vi)] for any $x\in\rrp$, $\sum_j\chi_{I_j}(x)\le M$.
\end{itemize}
\end{lem}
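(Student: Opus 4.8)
The plan is to follow the Calder\'on--Zygmund construction of Coifman and Weiss \cite{cw71}, specialized to the space of homogeneous type $(\rrp,\,|\cdot|,\,dm_\lz)$ whose doubling property is recorded in \eqref{volume property-1}. First I would fix $f\in\loz$ and $\eta>0$, let $\cm_\lz$ be the Hardy--Littlewood maximal operator associated with $dm_\lz$, and pass to the level set
\[
\boz_\eta:=\{x\in\rrp:\ \cm_\lz f(x)>\eta\}.
\]
This set is open, since $\cm_\lz f$ is lower semicontinuous, and the weak type $(1,1)$ inequality for $\cm_\lz$ on a homogeneous space (see \cite{cw71}, together with \eqref{volume property-1}) gives $m_\lz(\boz_\eta)\le \frac C\eta\|f\|_\loz$. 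As $m_\lz(\rrp)=\fz$ while the right-hand side is finite, $\boz_\eta$ is a proper open subset of $\rrp$ (and if $\boz_\eta=\emptyset$ one just takes $g=f$, $b=0$), so I may assume $\boz_\eta\ne\emptyset$.

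Next I would decompose $\boz_\eta$ by a Whitney-type covering: there is a countable family of intervals $\{I_j\}_j$ with $\boz_\eta=\bigcup_j I_j$, a constant $M\ge1$ with $\sum_j\chi_{I_j}\le M$ on $\rrp$, and fixed constants $1<c_1<c_2$ such that $c_1 I_j\subset\boz_\eta$ while $c_2 I_j\cap(\rrp\setminus\boz_\eta)\ne\emptyset$ for every $j$. On $\rrp$ this is elementary ($\boz_\eta$ is a disjoint union of open intervals, each split into subintervals of length comparable to their distance to $\rrp\setminus\boz_\eta$); in a general homogeneous space it is the Vitali-type construction of \cite{cw71}. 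Using the bounded overlap I would build a measurable partition of unity $\{\vz_j\}_j$ subordinate to $\{I_j\}_j$ with $0\le\vz_j\le\chi_{I_j}$, $\sum_j\vz_j=\chi_{\boz_\eta}$, and $\int_\rrp\vz_j\,\dmz\gs m_\lz(I_j)$ (the last estimate from \eqref{volume property-1}), and then set
\[
c_j:=\frac{\int_\rrp f\,\vz_j\,\dmz}{\int_\rrp\vz_j\,\dmz},\qquad b_j:=(f-c_j)\vz_j,\qquad g:=f\,\chi_{\rrp\setminus\boz_\eta}+\sum_j c_j\vz_j.
\]
A short computation gives $f=g+\sum_j b_j$, and by construction $\supp b_j\subset I_j$ and $\int_\rrp b_j\,\dmz=0$, which accounts for (i), the cancellation in (ii), and (vi).

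The remaining estimates are then routine bookkeeping with \eqref{volume property-1}. Since $c_2 I_j$ contains a point $z_j$ with $\cm_\lz f(z_j)\le\eta$, enlarging $I_j$ to a ball centred at $z_j$ and using \eqref{volume property-1} gives $\frac1{m_\lz(I_j)}\int_{I_j}|f|\,\dmz\ls\eta$; hence $|c_j|\le\big(\int_\rrp\vz_j\,\dmz\big)^{-1}\int_{I_j}|f|\,\dmz\ls\eta$, and with $|b_j|\le(|f|+|c_j|)\vz_j$ and $\sum_j\chi_{I_j}\le M$ this yields $\frac1{m_\lz(I_j)}\int_{I_j}|b_j|\,\dmz\ls\eta$, finishing (ii). For (iii), the Lebesgue differentiation theorem on $(\rrp,\,dm_\lz)$ gives $|f|\le\cm_\lz f\le\eta$ a.e.\ on $\rrp\setminus\boz_\eta$, while on $\boz_\eta$ one has $|g|=\big|\sum_j c_j\vz_j\big|\le(\sup_j|c_j|)\chi_{\boz_\eta}\ls\eta$, so $\|g\|_\linz\ls\eta$; also $\|b_j\|_\loz\le2\int_\rrp|f|\vz_j\,\dmz$ (because $|c_j|\int_\rrp\vz_j\,\dmz\le\int_\rrp|f|\vz_j\,\dmz$), so $\sum_j\|b_j\|_\loz\le2\int_\rrp|f|\sum_j\vz_j\,\dmz\le2\|f\|_\loz$ (this is (iv)) and $\|g\|_\loz\le\|f\|_\loz+\sum_j\|b_j\|_\loz\ls\|f\|_\loz$. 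Finally (v) is $\sum_j m_\lz(I_j)\ls\sum_j m_\lz(c_1 I_j)\le M\,m_\lz(\boz_\eta)\le\frac C\eta\|f\|_\loz$, by \eqref{volume property-1}, the bounded overlap, and the weak $(1,1)$ bound. I expect the only genuinely delicate point to be producing the Whitney cover with uniformly bounded overlap in the homogeneous-space setting; every other step is a direct manipulation of the doubling inequality \eqref{volume property-1}.
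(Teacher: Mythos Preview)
Your proposal is correct and follows exactly the approach the paper takes: the paper does not give an independent proof of this lemma but simply quotes the Calder\'on--Zygmund decomposition from \cite[pp.\,73--74]{cw71}, valid on any space of homogeneous type, and your sketch is precisely a specialization of that construction to $(\rrp,\,|\cdot|,\,dm_\lz)$ using \eqref{volume property-1}. Your write-up is in fact more detailed than what the paper supplies.
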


\begin{proof}[Proof of Theorem \ref{t-bdd riesz}]
We only give the estimation $\osciz$. The proof for $\varoz$  can be given analogously as that of $\osciz$, and we leave the part to the interested readers. Moreover, by \eqref{oci-o'ci}, it suffices to prove that $\oscizp$ is bounded on $\lpz$ for all $p\in (1, \fz)$ and
from $\loz$ to $L^{1,\,\fz}(\rrp,\,dm_\lz)$. We divide the proof into two steps as follows.

{\bf Step 1.} We first show that $\oscizp$ is bounded on $L^p(\rrp,\,\dmz)$ for any $p\in(1+2\lz, \infty)$.
To this end, let $\{t_j\}_j$ be a fixed sequence which decreases to zero and $\delta_j\in(t_{j+1},t_j]$ for each $j$, and
 $$B_{\delta_j,\,t_{j+1}}:=\{y\in\rrp: t_{j+1}<|x-y|\le \delta_j\}.$$
 Motivated by the method in \cite{bct} (see also \cite{bfbmt-2}), we decompose the $\lf|R_{\Delta_\lz,\,t_{j+1}}f(x)-R_{\Delta_\lz,\,\dz_{j}}f(x)\r|$  according to the domain of integration as follows£»
\begin{eqnarray*}
&&\lf|R_{\Delta_\lz,\,t_{j+1}}f(x)-R_{\Delta_\lz,\,\dz_{j}}f(x)\r|\\
&&\quad=\lf|\int_{0}^{\frac{x}{2}}R_{\Delta_\lz}(x,y)f(y)\chi_{B_{\delta_j,\,t_{j+1}}}(y)y^{2\lz}dy+\int_{2x}^{\fz}R_{\Delta_\lz}(x,y)f(y)\chi_{B_{\delta_j,\,t_{j+1}}}(y)y^{2\lz}dy\r.\\
&&\quad\quad\quad+\int_{\frac{x}{2}}^{2x}-\frac{1}{\pi}\frac{1}{x^{\lz}y^{\lz}}\frac{f(y)}{x-y}\chi_{B_{\delta_j,\,t_{j+1}}}(y)y^{2\lz}dy\\
&&\quad\quad\quad+\lf.\int_{\frac{x}{2}}^{2x}\lf[R_{\Delta_\lz}(x,y)+\frac{1}{\pi}
\frac{1}{x^{\lz}y^{\lz}}\frac{1}{x-y}\r]f(y)\chi_{B_{\delta_j,\,t_{j+1}}}(y)y^{2\lz}dy\r|\\
&&\quad=:\lf|\sum_{i=1}^4{\rm I}_i(\delta_j, j)(x)\r|.
\end{eqnarray*}

Let $\mathcal E$  be the mixed norm
Banach space of two variables function $h$ defined on $(0,\infty)\times\mathbb{N}$ satisfying \eqref{mixed norm}.
Then we see that
$$\oscizp f(x)\le \sum_{i=1}^4\lf\|{\rm I}_i(\delta_j,j)(x)\r\|_{\mathcal E}.$$
By $\|\chi_{B_{\delta_j,\,t_{j+1}}}(y)\|_{\mathcal E}\le 1$, Lemma \ref{l-RieszCZ} i), \eqref{volume property-1} and Minkowski's inequality, we obtain that
$$\lf\|{\rm I}_1(\delta_j,j)(x)\r\|_{\mathcal E}\le \frac1{x^{2\lz+1}}\dint_0^x|f(y)|\ytz\lesssim \cm_\lz(f)(x),$$
where $\cm_\lz$ is the Hardy-Littlewood maximal operator defined by setting, for any function
$f\in L^1_{\rm loc}(\rrp, dm_\lz)$ and $x\in \rrp$,
$$\cm_\lz (f)(x):=\sup_{\gfz{I\subset\rr_+}{I\ni x}}\frac1{m_\lz(I)}\int_I|f(y)|\,\ytz.$$
From \cite{cw71}, we know that $\cm_\lz$ is bounded on $L^p(\rrp,\dmz)$  with $p\in(1, \infty)$ and from $L^1(\rrp,\dmz)$
to $L^{1,\infty}(\rrp,\dmz)$.

Similarly, by iii) and iv) of Lemma \ref{l-RieszCZ}, we have
$$\lf\|{\rm I}_2(\delta_j,j)(x)\r\|_{\mathcal E}\lesssim \int_{2x}^\fz|f(y)|\frac{dy}y=: T_1(f)(x),$$
and
\begin{eqnarray*}
\lf\|{\rm I}_4(\delta_j,j)(x)\r\|_{\mathcal E}\lesssim T_2(f)(x)+\frac1{x^{2\lz+1}}\int_{\frac x2}^{2x}|f(y)|\,y^{2\lz}dy\lesssim T_2(f)(x)+\cm_\lz(f)(x),
\end{eqnarray*}
where
$$T_2f(x):= \int_{\frac x2}^{2x}\frac1{x^{2\lz+1}}\log_+\frac{\sqrt{xy}}{|x-y|}|f(y)|\ytz.$$

By change of variables and  Minkowski's inequality, we see that for any $f\in\lpz$,
\begin{eqnarray}\label{T1}
\lf\|T_1(f)\r\|_\lpz&=&\lf\{\int_0^{\fz}\lf[\int_2^\fz|f(xz)|\,\frac{dz}{z}\r]^p\xtz\r\}^{1/p}\nonumber\\
&\le&\int_2^\fz\lf[\int_0^\fz|f(xz)|^p\,\xtz\r]^{1/p}\frac{dz}{z}\nonumber\\
&= &\int_2^\fz\lf[\int_0^\fz|f(y)|^p{y^{2\lz}}\,dy\r]^{1/p}\frac{dz}{z^{(2\lz+1)/p+1}}\nonumber\\
&=&\|f\|_\lpz\int_2^\fz z^{-(2\lz+1)/p-1}\,dz\nonumber\\
&=&\frac{p2^{-(2\lz+1)/p}}{2\lz+1}\|f\|_\lpz.
\end{eqnarray}

On the other hand, observe that
$$\lf[\int_{\frac x2}^{2x}\lf(\log_{+}\frac{\sqrt{xy}}{|x-y|}\r)^{p'}y^{2\lz}\, dy\r]^{\frac{p}{p'}}\sim
\lf(\int_{\frac12}^2\lf(\log_{+}\frac{\sqrt t}{|1-t|}\r)^{p'}\,dt\r)^{\frac{p}{p'}}
x^{\frac{p(2\lz+1)}{p'}}\sim x^{\frac{p(2\lz+1)}{p'}}.$$
By this and  H\"older's inequality, we have that
\begin{eqnarray}\label{T2}
&&\lf\|T_2(f)\r\|^p_\lpz\nonumber\\
&&\quad=\int_0^\fz\frac1{x^{p(2\lz+1)}}\lf[\int_{\frac x2}^{2x}\log_{+}\frac{\sqrt{xy}}{|x-y|}|f(y)|\,y^{2\lambda}dy\r]^p\xtz\nonumber\\
&&\quad\le \int_0^\fz x^{2\lz-p(2\lz+1)}\lf[\int_{\frac{x}2}^{2x}
\lf(\log_{+}\frac{\sqrt{xy}}{|x-y|}\r)^{p'}\ytz\r]^{\frac{p}{p'}}\lf[\int_{\frac{x}2}^{2x}|f(y)|^p\,\ytz\r]\,dx\nonumber\\
&&\quad\sim\int_0^\fz x^{2\lz-p(2\lz+1)+\frac {p(2\lz+1)}{p'}}\lf[\int_{\frac{x}2}^{2x}\lf(\log_{+}\frac{\sqrt{xy}}{|x-y|}\r)^{p'}\,dy\r]^{\frac{p}{p'}}
\lf[\int_{\frac{x}2}^{2x}|f(y)|^p\,\ytz\r] dx\nonumber\\
%&\ls& \int_0^\fz x^{2\lz+2\lz p(1-1/p)-p(2\lz+1)+p(1-1/p)}\int_{\frac{x}2}^{2x}|f(y)|^p\,\ytz\,dx\nonumber\\
&&\quad\sim \int_0^\fz x^{-1}\int_{\frac{x}2}^{2x}|f(y)|^p\,\ytz\,dx\nonumber\\
&&\quad\sim\int_0^\fz|f(y)|^py^{2\lz-1}\int_{\frac{y}2}^{2y}\,dx\,dy\nonumber\\
&&\quad\sim \int_0^\fz|f(y)|^p\,\ytz.
\end{eqnarray}

To estimate ${\rm I}_3$, let $H^{\rm loc}$ be the local Hilbert transform introduced by Andersen and Muckenhoupt \cite{am82}:
$$H^{\rm loc}f(x):=p. v. -\frac{1}{\pi}\int_{\frac x2}^{2x}\frac{f(y)}{x-y}\,dy,\qquad x>0.$$ We write
$$H^{\rm loc}_{\delta_j,t_{j+1}}f(x):=-\frac{1}{\pi}\int_{\frac x2}^{2x}\chi_{B_{\delta_j,\,t_{j+1}}}(y)\frac{f(y)}{x-y}\,dy,\qquad x>0.$$
Then from the mean value theorem, we deduce that
\begin{eqnarray*}
\lf|{\rm I}_3(\delta_j,j)(x)-H^{\rm loc}_{\dz_j,t_{j+1}}f(x)\r|&=&\frac1\pi\lf|\int_{\frac x2}^{2x}\frac{f(y)}{x-y}\frac{y^\lz-x^\lz}{x^{\lz}}\chi_{B_{\delta_j,\,t_{j+1}}}(y)\,dy\r|\\
&\ls& \frac1x\int_{\frac x2}^{2x}|f(y)|\chi_{B_{\delta_j,\,t_{j+1}}}(y)\,dy.\\
\end{eqnarray*}
This implies that
$$\lf\|{\rm I}_3(\delta_j,j)(x)\r\|_{\mathcal E}\ls  \lf\|H^{\rm loc}_{\dz_j,t_{j+1}}f\r\|_{\mathcal E}+\cm_\lz f(x).$$
Observe that
\begin{eqnarray*}
H^{loc}f(x)=H\tilde{f}(x)+\frac{1}{\pi}\int_{0}^{x/2}\frac{f(y)}{x-y}dy+\frac{1}{\pi}\int_{2x}^{\infty}\frac{f(y)}{x-y}dy,
\end{eqnarray*}
where $H$ is the Hilbert transform and
\begin{eqnarray*}%\label{f-tilde}
\tilde{f}(x)=\lf\{
\begin{array}{ll}
f(x), \qquad &x>0;\\
0, \qquad &\mbox{otherwise.}
\end{array}\r.
\end{eqnarray*}
 Therefore, we have
\begin{eqnarray*}
H^{loc}_{\delta_j,t_{j+1}}f(x)&&=H_{\delta_j,t_{j+1}}\tilde{f}(x)
+\frac{1}{\pi}\int_{0}^{x/2}\chi_{B_{\delta_j,\,t_{j+1}}}(y)\frac{f(y)}{x-y}dy\\
&&\qquad+\frac{1}{\pi}\int_{2x}^{\infty}\chi_{B_{\delta_j,\,t_{j+1}}}(y)\dfrac{f(y)}{x-y}dy.
\end{eqnarray*}
Then by \eqref{oe} we see that
\begin{eqnarray*}
\mathcal{O}'(H_{\ast}^{loc})f(x)=\|H^{loc}_{\delta_j,t_{j+1}}f\|_{\mathcal E}&&\le\|H_{\delta_j,t_{j+1}}\tilde{f}\|_{\mathcal E}+\frac1\pi\int_{0}^{x/2}\frac{|f(y)|}{|x-y|}dy+\frac1\pi\int_{2x}^{\infty}\frac{|f(y)|}{|x-y|}dy.\\
&&\le \mathcal{O}'(H_{\ast})\tilde{f}(x)+\frac{2}{x\pi}\int_{0}^{x}|f(y)|dy+\frac 2\pi\int_{2x}^{\infty}\frac{|f(y)|}{y}dy.\\
&&\ls \mathcal{O}'(H_{\ast})\tilde{f}(x)+\mathcal{M}f(x)+T_{1}f(x),
\end{eqnarray*}
where $\mathcal{M}f(x)$ is the Hardy-Littlewood maximal function defined as, for any $f\in L^1_{\rm loc}(\rr_+)$ and $x\in \rr_+$,
$$ \mathcal{M}f(x):=\sup_{\gfz{I\subset \rrp}{I\ni x}}\frac1{|I|}\int_I|f(y)|\,dy.$$
Consequently, we have
\begin{eqnarray*}
\oscizp f(x)\lesssim \mathcal{O}'(H_{\ast})\tilde{f}(x)+\mathcal{M}f(x)+T_1f(x)+T_2f(x)+\cm_{\lambda}f(x).
\end{eqnarray*}
From \cite[Theorem 1.5]{gt}, we have $\mathcal{O}'(H_\ast): L^p(\omega)\rightarrow L^p(\omega),\, \omega\in A_p(\rr)$. And from \cite{Mu}, we also have $\mathcal{M}$ is bounded on $L^p(\omega)$. Moreover, from \cite[p.\,286]{LG}, we know that $x^{2\lz}\in A_p$ if and only if $0<2\lz<p-1$. Thus combing \eqref{T1} and \eqref{T2}, we get $\oscizp f(x)$ is bounded on $L^p(\rrp,\dmz)$ for $p\in(1+2\lz, \infty)$.

{\bf Step 2.} By applying the Marcinkiewicz interpolation theorem and Step 1, to finish the proof of Theorem \ref{t-bdd riesz},
it suffices to prove that for any $f\in L^1(\rrp,\,\dmz)$ and $\eta>0$,
\begin{equation}{\label{rf}}
m_{\lz}(\{x\in\rrp: \oscizp f(x)>\eta\})\ls \frac{1}{\eta}\|f\|_{L^1(\rrp,\,\dmz)}.
\end{equation}
The main tool in our proof is the Calder\'on-Zygmund decomposition in Lemma \ref{l-cz}. Let $g,\,b,\,\{b_j\}_j$ and $\{I_j\}_j$ be as Lemma \ref{l-cz}.
Since the operator $\oscizp$ is sublinear, to show \eqref{rf}, it suffices to prove
\begin{equation}{\label{rg}}
m_{\lz}\Big(\Big\{x\in\rrp: |\oscizp(g)(x)|>\frac{\eta}{2}\Big\}\Big)\ls\frac{1}{\eta}\|f\|_{L^1(\rrp,\,\dmz)},
\end{equation}
 and
\begin{equation}\label{rb}
m_{\lz}\Big(\Big\{x\in\rrp: |\oscizp(b)(x)|>\frac{\eta}{2}\Big\}\Big)\ls \frac{1}{\eta}\|f\|_{L^1(\rrp,\,\dmz)}.
\end{equation}

For \eqref{rg}, by the $L^{p}$-boundedness of $\oscizp$ with $p>1+2\lz$ from Step 1 and Lemma \ref{l-cz} {\rm (iii)}, we have
\begin{eqnarray*}
m_{\lz}\Big(\Big\{x\in\rrp: |\oscizp(g)(x)|>\dfrac{\eta}{2}\Big\}\Big)&&\ls \frac{1}{{\eta}^p}\int_{\rrp}\lf[\oscizp(g)(x)\r]^{p}\dmz(x)\\
&&\ls \frac{1}{{\eta}^p}\int_{\rrp}|g(x)|^{p}\dmz(x)\\
&&\ls \frac{1}{{\eta}} \int_{\rrp}|f(x)|\dmz(x).
\end{eqnarray*}
This shows \eqref{rg}.

In what follows, we prove \eqref{rb}.  Let $\tilde{I_{j}}:= 3I_j$ and $\mathcal{\widetilde{I}}:=\bigcup_{j}\tilde{I_{j}}$.
Using the doubling property of $m_{\lz}$ \eqref{volume property-1} and Lemma \ref{l-cz} {\rm (v)}, we write
\begin{eqnarray*}
&&m_{\lz}\Big(\Big\{x\in\rrp:\,|\oscizp(b)(x)|>\dfrac{\eta}{2}\Big\}\Big)\\
&&\qquad\lesssim m_{\lz}(\widetilde{\mathcal{I}})+m_{\lz}\Big(\Big\{x\in\rr_{+}\backslash\widetilde{\mathcal{I}}:\,|\oscizp(b)(x)|>\dfrac{\eta}{2}\Big\}\Big)\\
&&\qquad\lesssim\frac{1}{\eta}\|f\|_{\loz}+m_{\lz}\Big(\Big\{x\in\rr_{+}\backslash\mathcal{\widetilde{I}}:|\oscizp(b)(x)|>\frac{\eta}{2}\Big\}\Big).
\end{eqnarray*}
It remains to estimate the second term on the right of the last inequality. To this end, we first introduce some notations. Let $\dz_i\in (t_{i+1},\,t_{i}]$ and $A_{\dz_i}$ be the interval $(t_{i+1},\,\dz_i]$.
Set $$R_{\Delta_{\lz}}^{A_{\dz_i}}b(x):=\dint_{|x-y|\in A_{\dz_i}}R_{\Delta_{\lz}}(x,y)b(y)\dmz(y).$$
Then, the operator $\oscizp (b)(x)$ can be expressed more conveniently as:
$$\oscizp b(x)=\lf[\sum_{i=1}^{\infty}\sup_{{\dz_i}\in (t_{i+1},\,t_i]}\Big|R_{\Delta_{\lz}}^{A_{\dz_i}}(b)(x)\Big|^{2}\r]^{1/2}.$$
For every $x\in\rrp\backslash\mathcal{\widetilde{I}}$, choose a $\tilde{\dz_i}\in (t_{i+1},\,t_i]$ such that
$$ \oscizp (b)(x)\leq\Big[\sum_{i} 2\Big|R_{\Delta_{\lz}}^{A_{\tilde{\dz_i}}}(b)(x)\Big|^2\Big]^{1/2}\le 2\Big[\sum_{i} \Big|\sum_jR_{\Delta_{\lz}}^{A_{\tilde{\dz_i}}}(b_j)(x)\Big|^2\Big]^{1/2}.$$
Then we only need to prove the following inequality:
\begin{equation}\label{db}
m_{\lz}\Big(\Big\{x\in\rr_{+}\backslash\mathcal{\widetilde{I}}:\Big(\sum_{i} |\sum_jR_{\Delta_{\lz}}^{A_{\tilde{\dz_i}}}(b_j)(x)|^2\Big)^{1/2}>\frac{\eta}{4}\Big\}\Big)\ls \frac{1}{{\eta}} \|f\|_{\loz}.
\end{equation}
For each $x\in \rr_+\setminus\widetilde{\mathcal{I}}$ and $i$,
 $$A_{\tilde{\dz_i}}+x:=(x+t_{i+1},\,x+\tilde{\dz_i}]\cup\lf([x-\tilde{\dz_i},\,x-t_{i+1})\cap \rrp\r).$$   Note that $R_{\Delta_{\lz}}^{A_{\tilde{\dz_i}}}(b_j)(x)$ is nonzero only if, for some $j$, $I_{j}$ lies entirely in $A_{\tilde{\dz_i}}+x$ or $I_{j}$ intersects  the boundary of $A_{\tilde{\dz_i}}+x$. So, we split the family of indexes $j$ in two categories:
$$L_i^1:=\{j:I_{j}\subset x+A_{\tilde{\dz_i}}\} \ \ \ {\rm and}\ \ \   L_i^2:=\{j:I_{j}\nsubseteq x+A_{\tilde{\dz_i}}\, \text{and}\,  I_{j}\cap(x+ A_{\tilde{\dz_i}})\neq \emptyset\}.$$
Thus,
\begin{eqnarray*}
&&\Big[\sum_{i}\Big|\sum_{j}R_{\Delta_{\lz}}^{A_{\tilde{\dz_i}}}(b_j)(x)\Big|^{2}\Big]^{1/2}\\
&&\qquad\le  \Big[\sum_{i}\Big|\sum_{j\in L_i^1}R_{\Delta_{\lz}}^{A_{\tilde{\dz_i}}}(b_j)(x)\Big|^{2}\Big]^{1/2}+\Big[\sum_{i}\Big|\sum_{j\in L_i^2}R_{\Delta_{\lz}}^{A_{\tilde{\dz_i}}}(b_j)(x)\Big|^{2}\Big]^{1/2}.
\end{eqnarray*}
Hence, to prove \eqref{db}, it suffices to show the following two inequalities:
\begin{equation}\label{dbi}
m_{\lz}\Big(\Big\{x\in\rrp\backslash\mathcal{\widetilde{I}}:\Big[\sum_{i}\Big|\sum_{j\in L_i^1}R_{\Delta_{\lz}}^{A_{\tilde{\dz_i}}}(b_j)(x)\Big|^{2}\Big]^{1/2}>\frac{\eta}{8}\Big\}\Big)\lesssim \frac{1}{{\eta}}\|f\|_{L^1(\rrp,\dmz)},
\end{equation}
 and
\begin{equation*}\label{dbo}
m_{\lz}\Big(\Big\{x\in\rrp\backslash\mathcal{\widetilde{I}}:\Big[\dsum_{i}\Big|\sum_{j\in L_i^2}R_{\Delta_{\lz}}^{A_{\tilde{\dz_i}}}(b_j)(x)\Big|^{2}\Big]^{1/2}>\frac{\eta}{8}\Big\}\Big)\lesssim \frac{1}{{\eta}}\|f\|_{L^1(\rrp,\dmz)}.
\end{equation*}

 We first prove \eqref{dbi}. Fix $i\in\nn$ and let $j\in L_{i}^{1}$, that is, $I_{j}\subset x+A_{\tilde{\dz_i}}$. Since by Lemma \ref{l-cz} (ii), $\int_{\rrp}b_j(x)\dmz(x)=0$, we have
\begin{equation}\label{r-b}
R_{\Delta_{\lz}}^{A_{\tilde{\dz_i}}}(b_j)(x)=\int_{\rrp}R_{\Delta_{\lz}}(x,y)b_{j}(y)\dmz(y)=\int_{\rrp}\lf[R_{\Delta_{\lz}}(x,y)-R_{\Delta_{\lz}}(x,y_j)\r]b_{j}(y)\dmz(y),
\end{equation}
where $y_j$ is the center of $I_{j}$. Because for fixed $x$, $x+A_{\tilde{\dz_i}}$ and $x+A_{\tilde{\dz}_{k}}$ are disjoint for $i\neq k$, we see that for $j\in L_{i}^{1}$, $I_j$ and $x+A_{\tilde{\dz}_{k}}$ are disjoint. Therefore, by \eqref{cz kernel condition-2} and \eqref{r-b},
\begin{eqnarray*}
\Big[\sum_{i}\Big|\sum_{j\in L_i^1}R_{\Delta_{\lz}}^{A_{\tilde{\dz_i}}}(b_j)(x)\Big|^{2}\Big]^{1/2}&&\le \sum_{i}\Big|\dsum_{j\in L_i^1}R_{\Delta_{\lz}}^{A_{\tilde{\dz_i}}}(b_j)(x)\Big|\\
&&\lesssim \sum_{j}\dint_{\rrp} |R_{\Delta_{\lz}}(x,y)-R_{\Delta_{\lz}}(x,y_j)||b_j(y)|\dmz(y)\\
&&\lesssim\sum_{j}\dint_{\rrp}\dfrac{|y-y_j|}{|x-y|}\dfrac{1}{\mxy}|b_{j}(y)|\dmz(y)\\
\end{eqnarray*}
 Thus we have
\begin{eqnarray*}
&&m_{\lz}\Big(\Big\{x\in\rrp\backslash\mathcal{\widetilde{I}}:\Big[\sum_{i}\Big|\sum_{j\in L_i^1}R_{\Delta_{\lz}}^{A_{\tilde{\dz_i}}}(b_j)(x)\Big|^{2}\Big]^{1/2}>\frac{\eta}{8}\Big\}\Big)\\
&&\qquad\lesssim \frac{1}{\eta}\int_{\rrp\backslash\mathcal{\widetilde{I}}}\sum_j\int_{\rrp}\dfrac{|y-y_j|}{|x-y|}\dfrac{1}{\mxy}|b_{j}(y)|\dmz(y)\dmz(x)\\
&&\qquad\lesssim \frac{1}{\eta}\sum_{j}\int_{I_j}|b_j(y)|\int_{\rrp\backslash\tilde{I_j}}\dfrac{|y-y_j|}{|x-y|}\dfrac{1}{\mxy}\dmz(x)\dmz(y)\\
&&\qquad\lesssim \frac{1}{\eta}\sum_{j}\int_{I_j}|b_j(y)|\sum_{k=1}^{\infty}\frac{|I_j|}{3^k|I_j|}\int_{3^{k+1}I_j\setminus3^kI_j}\frac{\dmz(x)}{\mxy}\dmz(y).
\end{eqnarray*}
Note that for any $x\in \rrp\setminus \mathcal{\widetilde{I}}$, \, $y,\,y_j\in I_{j}$, we have $|x-y|\sim |x-y_j|$ and

\begin{equation}\label{mxyj}
\mxy\sim  m_{\lz}(I(x,|x-y_j|))\sim m_{\lz}(I(y_j,|x-y_j|)).
\end{equation}
Thus, we get
\begin{eqnarray*}
\dint_{3^{k+1}I_j\backslash 3^kI_j}\dfrac{\dmz(x)}{\mxy}\lesssim\dfrac{m_{\lz}(3^{k+1}I_j)}{m_{\lz}(3^{k}I_j)}\ls 1.
\end{eqnarray*}
Consequently, by this fact and Lemma \ref{l-cz} (iv), we conclude that
$$m_{\lz}\Big(\Big\{x\in\rr_{+}\backslash\mathcal{\tilde{I}}: \,\Big(\sum_i\Big|\sum_{j\in L_i^1}R_{\Delta_{\lz}}^{A_{\tilde{\dz_i}}}(b_j)(x)\Big)>\dfrac {\eta}{8}\Big\}\Big)\lesssim \frac{1}{{\eta}}\|f\|_{\loz}.$$
This completes the proof of \eqref{dbi}.

 For the part $L^2_i$, a simple geometrical inspection via Lemma \ref{l-cz}(vi) shows that $L_i^2$ contains at most finite $j$'s for any $i$. It then follows that
 \begin{eqnarray*}
 \sum_i\lf|\dsum_{j\in L_i^2}R_{\Delta_{\lz}}^{A_{\tilde{\dz_i}}}(b_j)(x)\r|^2&&\lesssim \sum_i\sum_{j\in L_i^2}\lf|R_{\Delta_{\lz}}^{A_{\tilde{\dz_i}}}(b_j)(x)\r|^2\\
 &&\ls \sum_j\sum_i\lf|R_{\Delta_{\lz}}^{A_{\tilde{\dz_i}}}(b_j)(x)\r|^2\\
 &&\ls
 \sum_j\lf(\sum_i\lf|R_{\Delta_{\lz}}^{A_{\tilde{\dz_i}}}(b_j)(x)\r|\r)^2.
 \end{eqnarray*}
Thus
\begin{eqnarray*}
&&m_{\lz}\Big(\Big\{x\in\rrpi: \Big(\dsum_{i}|\dsum_{j\in L_{i}^{2}} R_{\Delta_{\lz}}^{A_{\tilde{\dz_i}}}(b_j)(x)|^{2}\Big)^{1/2}>\frac {\eta}{8}\Big\}\Big)\\
&&\qquad\le
m_{\lz}\Big(\Big\{x\in\rrpi: \dsum_{i}|\dsum_{j\in L_{i}^{2}} R_{\Delta_{\lz}}^{A_{\tilde{\dz_i}}}(b_j)(x)|^{2}>\frac {{\eta}^2}{64}\Big\}\Big)\\
&&\qquad\lesssim \frac{1}{{\eta}^2}\dsum_j\dint_{\rrp\setminus \tilde{I}_j}\lf(\dsum_{i}\lf| R_{\Delta_{\lz}}^{A_{\tilde{\dz_i}}}(b_j)(x)\r|\r)^{2}\dmz(x).
\end{eqnarray*}
Let $x\in \rr_{+}\setminus\tilde{I}_j$ for some $j$. Then by \eqref{cz kernel condition-1} and \eqref{mxyj}, we have
\begin{eqnarray*}
\sum_{i}| R_{\Delta_{\lz}}^{A_{\tilde{\dz_i}}}(b_j)(x)|^{2}&&\le \sum_i\lf[\int_{|x-y|\in A_{\tilde{\dz_i}}}|R_{\Delta_{\lz}}(x,y)||b_j(y)|\dmz(y)\r]^2\\
 &&\lesssim \sum_i\lf[\int_{|x-y|\in A_{\tilde{\dz_i}}}\dfrac{|b_j(y)|}{\mxy}\dmz(y)\r]^2\\
 &&\lesssim\dfrac{1}{[ m_{\lz}(I(y_j,|x-y_j|))]^2}\int_{\rrp}|b_j(y)|\dmz(y)\\
 &&\quad\times \sum_i\int_{|x-y|\in A_{\tilde{\dz_i}}}\dfrac{|b_j(y)|}{\mxy}\dmz(y)\\
 &&\lesssim\dfrac{1}{[ m_{\lz}(I(y_j,|x-y_j|))]^2}\lf[\int_{\rrp}|b_j(y)|\dmz(y)\r]^2.
\end{eqnarray*}
Therefore, by (ii) and (iv) of Lemma \ref{l-cz} , we get
\begin{eqnarray*}
&&\int_{\rrp\setminus \tilde{I}_j}\Big(\sum_i\lf| R_{\Delta_{\lz}}^{A_{\tilde{\dz_i}}}(b_j)(x)\r|^2\Big)\dmz(x)\\
&&\quad\lesssim\int_{\rrp\setminus \tilde{I}_j}\dfrac{1}{[ m_{\lz}(I(y_j,|x-y_j|))]^2}\Big(\int_{\rrp}|b_j(y)|\dmz(y)\Big)^2\dmz(x)\\
&&\quad\lesssim\sum_{k=1}^{\infty}\int_{3^{k+1}I_j\backslash3^kI_j}\dfrac{\eta m_{\lz}(I_j)}{[ m_{\lz}(I(y_j,|x-y_j|))]^2}\dmz(x)
\int_{\rrp}|b_j(y)|\dmz(y)\\
&&\quad\lesssim \eta\int_{\rrp}|b_j(y)|\dmz(y)\sum_{k=1}^{\infty}\dfrac{m_{\lz}(I_j)m_{\lz}(3^{k+1}I_j)}{[m_{\lz}(3^kI_j)]^2}\\
&&\quad\lesssim\eta\int_{\rrp}|b_j(y)|\dmz(y).
\end{eqnarray*}
Consequently, we get $$m_{\lz}\Big(\Big\{x\in\rrpi: \Big[\sum_{j}\Big|\sum_{i\in L_{i}^{2}} R_{\Delta_{\lz}}^{A_{\tilde{\dz_i}}}(b_j)(x)\Big|^{2}\Big]^{1/2}>\frac {\eta}{4}\Big\}\Big)\lesssim \frac{1}{\eta}\|f\|_{\loz},$$
which finishes the proof of weak type (1,\,1) estimation of $\oscizp$. Theorem 1.1 is proved.
\end{proof}

\section{Proof of  Theorem \ref{t-bmo riesz}}\label{s3}

As the proof of Theorem \ref{t-bdd riesz}, by similarity, we only consider $\osciz$. The proof for $\varoz$ is similar and omitted.

 Fix $f\in \liz$ and define $f_1(y):=f(y)\chi_{4I}$ and $f_2(y):=f(y)\chi_{\rrp\backslash 4I}$, where $I:=I(x_0,\,r)$.
 From  the H\"older inequality, Theorem \ref{t-bdd riesz} and \eqref{volume property-1}, we deduce that
\begin{align}\label{o-f1}
\dint_I|\osciz(f_1)(x)|\dmz(x)&\le[m_\lz(I)]^\frac12\lf[\dint_{\rrp}|\osciz (f_1)(x)|^2\dmz(x)\r]^\frac12\nonumber\\
&\ls [m_\lz(I)]^\frac12\lf[\dint_{4I}|f(x)|^2\dmz(x)\r]^\frac12\nonumber\\
&\lesssim m_{\lz}(I)\|f\|_{\liz}.
\end{align}
Then $\osciz(f_1)(x)<\infty,\, a.\,e.\, x\in I$.  According to the assumption,  we may choose $x_1\in I$ such that $\osciz(f_2)(x_1)<\infty$.

By the fact that $\osciz$ is sublinear, \eqref{o-f1}, %\eqref{oci-o'ci} and \eqref{oe},
we write
\begin{eqnarray*}
&&\frac{1}{m_{\lz}(I)}\int_{I}\lf|\osciz(f)(x)-\osciz(f_2)(x_1)\r|\dmz(x)\nonumber\\
%&&\quad\le \frac{1}{m_{\lz}(I)}\int_{I}\lf|\oscizp(f)(x)-\oscizp(f_2)(x_1)\r|\dmz(x)\nonumber\\
%&&\quad\le \frac{1}{m_{\lz}(I)}\int_{I}\lf|\oscizp(f)(x)-\oscizp(f_2)(x_1)\r|\dmz(x)\nonumber\\
&&\quad\le\frac{1}{m_{\lz}(I)}\int_{I}\osciz(f_1)(x)\,\dmz(x)\nonumber\\
&&\qquad+\frac{1}{m_{\lz}(I)}\int_{I}\lf|\osciz(f_2)(x)-\osciz(f_2)(x_1)\r|\dmz(x)\nonumber\\
&&\quad\ls\|f\|_{\liz}+\frac{1}{m_{\lz}(I)}\int_{I}\lf|\osciz(f_2)(x)-\osciz(f_2)(x_1)\r|\,\dmz(x).
\end{eqnarray*}
Thus,  to finish the proof of Theorem \ref{t-bmo riesz}, it suffices to show that
\begin{eqnarray*}\label{u2}
\frac{1}{m_{\lz}(I)}\int_{I}\lf|\osciz(f_2)(x)-\osciz(f_2)(x_1)\r|\,\dmz(x)\ls \|f\|_\linz.
\end{eqnarray*}

Assume that $\osciz$ is defined by a given sequence $\{t_i\}_i$ decreasing and  converging to zero.
Now, for $x\in I$, and $I_i:=(t_{i+1}, t_i]$, $i\in\nn$, by \eqref{oci-o'ci} and \eqref{oe}, we write
\begin{eqnarray*}
&&\lf|\osciz(f_2)(x)-\osciz(f_2)(x_1)\r|\\
&&\quad\le \lf(\sum_{i=1}^\fz\sup_{t_{i+1}\le \ez_{i+1}< \ez_i \le t_i}\lf|\lf[R_{\Delta_\lz,\,\ez_i}f_2(x)-R_{\Delta_\lz,\,\ez_{i+1}}f_2(x)\r]\r.\r.\\
&&\quad\quad\quad\lf.\lf.-\lf[R_{\Delta_\lz,\,\ez_i}f_2(x_1)-R_{\Delta_\lz,\,\ez_{i+1}}f_2(x_1)\r]\r|^2\r)^{1/2}\\
&&\quad\ls \lf(\sum_{i=1}^\fz\sup_{t_{i+1}< \dz_i \le t_i}\lf|\lf[R_{\Delta_\lz,\,t_{i+1}}f_2(x)-R_{\Delta_\lz,\,\dz_i}f_2(x)\r]\r.\r.\\
&&\quad\quad\quad\lf.\lf.-\lf[R_{\Delta_\lz,\,t_{i+1}}f_2(x_1)-R_{\Delta_\lz,\,\dz_{i}}f_2(x_1)\r]\r|^2\r)^{1/2}\\
&&\quad=\Big\|\Big\{\int_{\{t_{i+1}<|x-y|\le\dz_i\}}\riz(x,y)f_2(y)\,\dmz(y)\\
&&\quad\quad\,\,-\int_{\{t_{i+1}<|x_1-y|\le\dz_i\}}\riz(x_1,y)f_2(y)\,\dmz(y)\Big\}_{\dz_i\in I_i,\,i\in \mathbb{N}}\Big\|_{\mathcal E}\\
&&\quad\le\lf\|\lf\{\int_{\{t_{i+1}<|x-y|\le\dz_i\}}[\riz(x,y)-\riz(x_1,y)]f_2(y)\,\dmz(y)\r\}_{\dz_i\in I_i,\,i\in \mathbb{N}}\r\|_{\mathcal E}\\
&&\qquad\,\, +\lf\|\lf\{\int_{\rrp}\lf[\chi_{\{t_{i+1}<|x-y|\le\dz_i\}}(y)-
\chi_{\{t_{i+1}<|x_1-y|\le\dz_i\}}(y)\r]\riz(x_1,y)f_2(y)\,\dmz(y)\r\}_{\dz_i\in I_i,\,i\in \mathbb{N}}\r\|_{\mathcal E}\\
&&\quad=:\rm {D}_1+\rm {D}_2.
\end{eqnarray*}
Notice that
$$\|\{\chi_{\{t_{i+1}<|x-y|\le\dz_i\}}(y)\}_{\dz_i\in I_i,\,i\in \mathbb{N}}\|_{\mathcal E}\le 1.$$
 By this fact, Mikowski's inequality, \eqref{cz kernel condition-2} and \eqref{volume property-1}, we have
\begin{eqnarray*}
\rm {D}_1&&\le \int_{\rrp}\|\{\chi_{\{t_{i+1}<|x-y|\le\dz_i\}}(y)\}_{\dz_i\in J_i,i\in \mathbb{N}}\|_{\mathcal E}|\riz(x,y)-\riz(x_1,y)||f_2(y)|\dmz(y)\\
&&\le\int_{\rrp}|\riz(x,y)-\riz(x_1,y)||f_2(y)|\dmz(y)\\
&&\lesssim\sum_{k=2}^{\infty}\int_{2^{k+1}I\backslash2^kI}\frac{|x-x_1||f(y)|}{|x-y|m_{\lz}(I(y,|x-y|))}\dmz(y)\\
&&\lesssim\|f\|_{\liz}\sum_{k=2}^{\infty}\dfrac{|I|}{2^k|I|}\int_{2^{k+1}I\backslash2^kI}\dfrac{\dmz(y)}{m_{\lz}(I(y,|x-y|))}\\
&&\lesssim\|f\|_{\liz}\sum_{k=2}^{\infty}\dfrac{1}{2^k}\dfrac{m_{\lz}(2^{k+1}I)}{m_{\lz}(2^kI)}\\
&&\lesssim\|f\|_{\liz},
\end{eqnarray*}
where in the third-to-last inequality, we use the fact that for $x,x_0\in I$, $y\in \rrp\setminus 4I$, $|x-y|\sim|x_0-y|$,
and in the second-to-last inquality,
\begin{equation}\label{Ix0x}
m_{\lz}(I(y,|x-y|))\sim m_{\lz}(I(y,|x_0-y|))\sim m_{\lz}(I(x_0,|x_0-y|)).
\end{equation}

For ${\rm {D}_2}$, note that the integral
\begin{equation*}
{\rm E}:=\dint_{\rrp}|\chi_{\{t_{i+1}<|x-y|\le\dz_i\}}(y)-\chi_{\{t_{i+1}<|x_1-y|\le\dz_i\}}(y)||\riz(x_1,y)||f_2(y)|\dmz(y)\neq 0
\end{equation*}
 only if either
 $$\chi_{\{t_{i+1}<|x-y|\le\dz_i\}}(y)=1\quad {\rm and}\quad \chi_{\{t_{i+1}<|x_1-y|\le\dz_i\}}(y)=0,$$
  or viceversa, that is,
  $$\chi_{\{t_{i+1}<|x-y|\le\dz_i\}}(y)=0 \quad {\rm and}\quad \chi_{\{t_{i+1}<|x_1-y|\le\dz_i\}}(y)=1.$$
Equivalently, if ${\rm E}\not=0$, at least one of the following four statements holds:
\begin{itemize}
 \item [{\rm  (i)}]$t_{i+1}<|x-y|\le\dz_i$ and $|x_1-y|\le t_{i+1}$;
  \item [{\rm (ii)}]$t_{i+1}<|x-y|\le\dz_i$ and $|x_1-y|> \dz_i$;
  \item[{\rm (iii)}] $|x-y|\le t_{i+1}$ and $t_{i+1}<|x_1-y|\le\dz_i$ ;
  \item [{\rm (iv)}]$|x-y|> \dz_i$ and $t_{i+1}<|x_1-y|\le\dz_i$.
\end{itemize}

Since  $|x-x_1|<2r$, we observe that in case (i),
$$t_{i+1}<|x-y|\le |x-x_1|+|x_1-y|<t_{i+1}+2r ;$$
in case (ii),
$$\dz_i<|x_1-y|\le |x_1-x|+|x-y|<\dz_i+ 2r;$$
in case (iii),
$$t_{i+1}<|x_1-y|<t_{i+1}+2r;$$
and in case (iv),
$$\dz_i<|x-y|<\dz_i +2r.$$
 Then, we write
\begin{eqnarray*}
%&&\dint_{\rrp}|\chi_{\{t_{i+1}<|x-y|\le\dz_i\}}(y)-\chi_{\{t_{i+1}<|x_1-y|\le\dz_i\}}(y)||\riz(x_1,y)||f_2(y)|\,\dmz(y)\\
{\rm E}&&\lesssim\int_{\rrp}\chi_{\{t_{i+1}<|x-y|\le\dz_i\}}(y)\chi_{\{t_{i+1}<|x-y|<t_{i+1}+2r\}}(y)|\riz(x_1,y)||f_2(y)|\,\dmz(y)\\
&&\quad+\int_{\rrp}\chi_{\{t_{i+1}<|x-y|\le\dz_i\}}(y)\chi_{\{\dz_i<|x_1-y|<\dz_i+2r\}}(y)|\riz(x_1,y)||f_2(y)|\,\dmz(y)\\
&&\quad+\int_{\rrp}\chi_{\{t_{i+1}<|x_1-y|\le\dz_i\}}(y)\chi_{\{t_{i+1}<|x_1-y|<t_{i+1}+2r\}}(y)|\riz(x_1,y)||f_2(y)|\,\dmz(y)\\
&&\quad+\int_{\rrp}\chi_{\{t_{i+1}<|x_1-y|\le\dz_i\}}(y)\chi_{\{\dz_i<|x-y|<\dz_i+2r\}}(y)|\riz(x_1,y)||f_2(y)|\,\dmz(y)\\
&&=:{\rm {J}}_1+{\rm{J}}_2+{\rm{J}}_3+{\rm{J}}_4.
\end{eqnarray*}

We now estimate ${\rm J_1}$ and claim that
 \begin{eqnarray}\label{esti for J1}
{{\rm{J}}_1}\ls \lf[\int_{\rrp}\chi_{\{t_{i+1}<|x-y|\le\dz_i\}}(y)\frac{r}{|x_0-y|}
\frac{|f_2(y)|^2}{m_{\lz}(I(x_0,|x_0-y|))}\,\dmz(y)\r]^{1/2}.
\end{eqnarray}
Indeed, observe that  for any $y\in \rrp\setminus 4I$ and $x\in I$,
 \begin{equation}\label{metric equiv}
 |x-y|>3r, \,\,|x_1-y|>3r,\,\,{\rm and}\,\,|x_1-y|/3<|x-y|<5|x_1-y|/3.
 \end{equation}
  Moreover, if $x,\,x_1\in I$ and $r\ge t_{i+1}$, $i\in\mathbb{N}$, we have
$$\{y\in\rrp\setminus 4I:\,\,t_{i+1}<|x-y|<t_{i+1}+2r\}\subset \{y\in\rrp\setminus 4I:\,\,|x-y|<3r\}=\emptyset.$$
This means ${\rm{J}}_{1}=0$ and \eqref{esti for J1} holds.

In the following, we assume  that $r<t_{i+1}$. From this assumption, we further deduce that
for $x\in I$,
\begin{eqnarray*}\label{meas upp t}
0<m_{\lz}(I(x,t_{i+1}+2r))-m_{\lz}(I(x,t_{i+1}))\ls (x+t_{i+1}+2r)^{2\lz} r\ls (x+t_{i+1})^{2\lz} r.
\end{eqnarray*}

 By this fact,  together with \eqref{cz kernel condition-1}, \eqref{volume property-1}, H\"{o}lder's inequality and the fact that for
  any $x\in I,\, y\in \rrp\setminus 4I$,
\begin{eqnarray}\label{mx-sim}
m_{\lz}(I(x_1,\,|x_1-y|))\sim m_{\lz}(I(y,\,|x_1-y|))\sim m_{\lz}(I(y,\,|x-y|))\sim m_{\lz}(I(x,\,|x-y|)),
\end{eqnarray}
we see that
 \begin{eqnarray*}
{{\rm{J}}_1}&&\le\lf[m_{\lz}(I(x,t_{i+1}+2r))-m_{\lz}(I(x,t_{i+1}))\r]^{1/2}\\
&&\quad\times\lf(\int_{\rrp}\chi_{\{t_{i+1}<|x-y|\le\dz_i\}}(y)|\riz(x_1,y)|^2|f_2(y)|^2\dmz(y)\r)^{1/2}\\
%\end{eqnarray*}
% we then estimate term ${\rm{J}}_1$ by considering the following two cases.
%
% Case (i)  $t_{i+1}<x\le3t_{i+1}$.  In this case, we have $x\sim t_{i+1}$. From \eqref{volume property-1}, we get
% $m_{\lz}(I(x,t_{i+1}+2r))\ls (t_{i+1}+2r)^{2\lz+1}$ and $m_{\lz}(I(x,t_{i+1}))\sim (t_{i+1})^{2\lz+1}$.
%Together with \eqref{cz kernel condition-1}, \eqref{mx-sim}, we get
%\begin{eqnarray*}
%&&\lesssim[(t_{i+1}+2r)^{2\lz+1}-t_{i+1}^{2\lz+1}]^{1/2}\\
&&\ls\lf[ (x+t_{i+1})^{2\lz} r \int_{\rrp}\chi_{\{t_{i+1}<|x-y|\le\dz_i\}}(y)\frac{|f_2(y)|^2}{m_{\lz}(I(x_1,|x_1-y|))^2}\,\dmz(y)\r]^{1/2}\\
&&\lesssim \lf[  \int_{\rrp}\chi_{\{t_{i+1}<|x-y|\le\dz_i\}}(y)\frac{|f_2(y)|^2}{m_{\lz}(I(x_1,|x_1-y|))}
\frac{(x+t_{i+1})^{2\lz}r}{m_{\lz}(I(x,|x-y|))}\,\dmz(y)\r]^{1/2}\\
&&\lesssim \lf[\int_{\rrp}\chi_{\{t_{i+1}<|x-y|\le\dz_i\}}(y)\frac{|f_2(y)|^2}{m_{\lz}(I(x,|x-y|))}\frac{(x+|x-y|)^{2\lz}r}{(x^{2\lz}+|x-y|^{2\lz})|x-y|}
\,\dmz(y)\r]^{1/2}\\
&&\ls \lf[\int_{\rrp}\chi_{\{t_{i+1}<|x-y|\le\dz_i\}}(y)\frac{|f_2(y)|^2}{m_{\lz}(I(x_0,|x_0-y|))}\frac{r}{|x_0-y|}
\,\dmz(y)\r]^{1/2}.
\end{eqnarray*}
%Case (ii). $x>3t_{i+1}$. From \eqref{volume property-1}, we get
%\begin{eqnarray*}
% m_{\lz}(I(x,t_{i+1}+2r))-m_{\lz}(I(x,t_{i+1}))\sim x^{2\lz}(t_{i+1}+2r)-x^{2\lz}t_{i+1}.
%\end{eqnarray*}
%This together with \eqref{cz kernel condition-1} ,\eqref{mx-sim} and \eqref{volume property-1} yield that
%\begin{eqnarray*}
%{{\rm{J}}_1}&&\lesssim\lf[x^{2\lz}(t_{i+1}+2r)-x^{2\lz}t_{i+1}\r]^{1/2}\\
%&&\qquad\times\lf(\int_{\rrp}\chi_{\{t_{i+1}<|x-y|\le\dz_i\}}(y)\frac{|f_2(y)|^2}{m_{\lz}(I(x_1,|x_1-y|))^2}\dmz(y)\r)^{1/2}\\
%&&\lesssim x^{\lz}r^{1/2}\lf(\int_{\rrp}\chi_{\{t_{i+1}<|x-y|\le\dz_i\}}(y)\r.\\
%&&\qquad\lf.\times\frac{|f_2(y)|^2}{m_{\lz}(I(x_1,|x_1-y|))m_{\lz}(I(x,|x-y|))}\dmz(y)\r)^{1/2}\\
%&&\lesssim r^{1/2}\lf(\int_{\rrp}\chi_{\{t_{i+1}<|x-y|\le\dz_i\}}(y)\frac{|f_2(y)|^2}{m_{\lz}(I(x_1,|x_1-y|))|x-y|}\dmz(y)\r)^{1/2}.
%\end{eqnarray*}
%Therefore, we can get
%\begin{equation*}
%{{\rm{J}}_1}\lesssim r^{1/2}\lf(\int_{\rrp}\chi_{\{t_{i+1}<|x-y|\le\dz_i\}}(y)\frac{|f_2(y)|^2}{m_{\lz}(I(x_1,|x_1-y|))|x-y|}\dmz(y)\r)^{1/2}.
%\end{equation*}
This implies \eqref{esti for J1}.

Now we estimate ${\rm J}_2$. By \eqref{metric equiv}, we see that
for $x_1\in I$ and $r\ge \dz_i$, $i\in\mathbb{N}$, we have
$$\{y\in\rrp\setminus 4I:\,\,\dz_i<|x-y|<\dz_i+2r\}\subset \{y\in\rrp\setminus 4I:\,\,|x-y|<3r\}=\emptyset.$$
which implies that ${\rm{J}}_{2}=0$.  Therefore, in what follows, we assume $r<\dz_i$. Observe that
\begin{eqnarray*}\label{meas upp delta}
0<m_{\lz}(I(x_1,\dz_i+2r))-m_{\lz}(I(x_1,\dz_i))\ls (x_1+\dz_i+2r)^{2\lz} r\ls (x_1+\dz_i)^{2\lz} r.
\end{eqnarray*}
 By this fact,  H\"{o}lder's inequality, \eqref{cz kernel condition-1}, \eqref{mx-sim}, \eqref{metric equiv} and \eqref{Ix0x}, we have
\begin{eqnarray*}
{{\rm{J}}_2}&&\le\lf[m_{\lz}(I(x_1,\dz_i+2r))-m_{\lz}(I(x_1,\dz_i))\r]^{1/2}\\
&&\quad\times\lf(\int_{\rrp}\chi_{\{\dz_i<|x_1-y|<\dz_i+2r\}}(y)\chi_{\{t_{i+1}<|x-y|\le\dz_i\}}(y)|\riz(x_1,y)|^2|f_2(y)|^2\dmz(y)\r)^{1/2}\\
&&\ls\lf[ \int_{\rrp}\chi_{\{\dz_i<|x_1-y|<\dz_i+2r\}}(y)\chi_{\{t_{i+1}<|x-y|\le\dz_i\}}(y)\frac{|f_2(y)|^2 (x_1+\dz_i)^{2\lz} r}{m_{\lz}(I(x_1,|x_1-y|))^2}\,\dmz(y)\r]^{1/2}\\
&&\lesssim \lf[  \int_{\rrp}\chi_{\{t_{i+1}<|x-y|\le\dz_i\}}(y)\frac{|f_2(y)|^2}{m_{\lz}(I(x_1,|x_1-y|))}
\frac{(x_1+|x_1-y|)^{2\lz}r}{m_{\lz}(I(x_1,|x_1-y|))}\,\dmz(y)\r]^{1/2}\\
&&\lesssim \lf[\int_{\rrp}\chi_{\{t_{i+1}<|x-y|\le\dz_i\}}(y)\frac{|f_2(y)|^2}{m_{\lz}(I(x_1,|x_1-y|))}\frac{(x_1+|x_1-y|)^{2\lz}r}{(x_1^{2\lz}+|x_1-y|^{2\lz})|x_1-y|}
\,\dmz(y)\r]^{1/2}\\
&&\ls \lf[\int_{\rrp}\chi_{\{t_{i+1}<|x-y|\le\dz_i\}}(y)
\frac{|f_2(y)|^2}{m_{\lz}(I(x_0,|x_0-y|))}\frac{r}{|x_0-y|}\,\dmz(y)\r]^{1/2}.
\end{eqnarray*}

Similarly,  we have
\begin{eqnarray*}
{{\rm{J}}_3}&&\le\lf[m_{\lz}(I(x_1,t_{i+1}+2r))-m_{\lz}(I(x_1,t_{i+1}))\r]^{1/2}\\
&&\quad\times\lf(\int_{\rrp}\chi_{\{t_{i+1}<|x_1-y|<t_{i+1}+2r\}}(y)\chi_{\{t_{i+1}<|x_1-y|\le\dz_i\}}(y)|\riz(x_1,y)|^2|f_2(y)|^2\,\dmz(y)\r)^{1/2}\\
&&\lesssim \lf[\int_{\rrp}\chi_{\{t_{i+1}<|x_1-y|\le\dz_i\}}(y)\frac{|f_2(y)|^2}{m_{\lz}(I(x_0,|x_0-y|))}\frac r{|x_0-y|}\,\dmz(y)\r]^{1/2},
\end{eqnarray*}
and
\begin{eqnarray*}
{{\rm{J}}_4}&&\le\lf[m_{\lz}(I(x,\dz_i+2r))-m_{\lz}(I(x,\dz_i))\r]^{1/2}\\
&&\quad\times\lf(\int_{\rrp}\chi_{\{\dz_i<|x-y|<\dz_i+2r\}}(y)\chi_{t_{i+1}<|x_1-y|\le\dz_i\}}(y)|\riz(x_1,y)|^2|f_2(y)|^2\,\dmz(y)\r)^{1/2}\\
&&\lesssim \lf[\int_{\rrp}\chi_{\{t_{i+1}<|x_1-y|\le\dz_i\}}(y)\frac{|f_2(y)|^2}{m_{\lz}(I(x_0,|x_0-y|))}\frac r{|x_0-y|}\,\dmz(y)\r]^{1/2}.
\end{eqnarray*}

Now returning  to the estimate of $\rm{D}_2$ and combing the estimates of ${\rm J}_1, {\rm J}_2,{\rm J}_3$ and ${\rm J}_4$, we can write
\begin{eqnarray*}
{{\rm{D}}_2}&&\lesssim  \Big\|\Big\{\Big[\int_{\rrp}\chi_{\{t_{i+1}<|x-y|\le\dz_i\}}(y)\frac{|f_2(y)|^2}{m_{\lz}(I(x_0,|x_0-y|))}\frac r{|x_0-y|}\dmz(y)\Big]^{1/2}\Big\}_{\dz_i\in I_i,\,i\in\mathbb{N}}\Big\|_{\mathcal E}\\
&&\qquad+ \Big\|\Big\{\Big[\int_{\rrp}\chi_{\{t_{i+1}<|x_1-y|\le\dz_i\}}(y)\frac{|f_2(y)|^2}{m_{\lz}(I(x_0,|x_0-y|))}\frac r{|x_0-y|}\dmz(y)\Big]^{1/2}\Big\}_{\dz_i\in I_i,\,i\in\mathbb{N}}\Big\|_{\mathcal E}\\
&&=:{\rm{D}}_{21}+{\rm{D}}_{22}.
\end{eqnarray*}
A straight calculation involving \eqref{volume property-1} implies that
\begin{eqnarray*}
%&&\Big\|\Big\{\Big[\int_{\rrp}\chi_{\{t_{i+1}<|x-y|\le\dz_i\}}(y)\frac{|f_2(y)|^2}{m_{\lz}(I(x,|x-y|))}\frac r{|x-y|}\,\dmz(y)\Big]^{1/2}\Big\}_{\dz_i\in I_i,\,i\in\mathbb{N}}\Big\|_{\mathcal E}\\
{\rm D}_{21}&&=\Big[\sum_{i\in \mathbb{N}}\sup_{\dz_i\in I_i}\int_{\rrp}\chi_{\{t_{i+1}<|x-y|\le\dz_i\}}(y)\frac{|f_2(y)|^2}{m_{\lz}(I(x_0,|x_0-y|))}\frac r{|x_0-y|}\,\dmz(y)\Big]^{1/2}\\
%&&\quad\lesssim\Big(\sum_{i\in \mathbb{N}}\int_{\rrp}\frac{|f_2(y)|^2}{m_{\lz}(I(x_1,|x_1-y|))|x_1-y|}\dmz(y)\Big)^{1/2}\\
&&\lesssim\Big[\int_{\rrp}\frac{|f_2(y)|^2}{m_{\lz}(I(x_0,|x_0-y|))}\frac r{|x_0-y|}\,\dmz(y)\Big]^{1/2}\\
&&\lesssim\Big[\sum_{k=2}^{\infty}\int_{2^{k+1}I\backslash 2^kI}\frac{|f(y)|^2}{m_{\lz}(I(x_0,|x_0-y|))}\frac r{|x_0-y|}\,\dmz(y)\Big]^{1/2}\\
&&\lesssim \|f\|_{\liz}\lf[\sum_{k=2}^{\infty}2^{-k}\dfrac{m_{\lz}(2^{k+1}I)}{m_{\lz}(2^kI)}\r]^\frac12\\
&&\sim \|f\|_{\liz}.
\end{eqnarray*}
%Therefore we get
%\begin{equation*}
%{{\rm{D}}_{21}}\lesssim \|f\|_{\liz}.
%\end{equation*}
Similarly, ${{\rm{D}}_{22}}\lesssim \|f\|_{\liz}.$
Consequently, we have
\begin{equation*}
{{\rm{D}}_{2}}\lesssim \|f\|_{\liz}.
\end{equation*}
Combining the estimate for ${\rm{D}}_1$, we complete the proof of Theorem \ref{t-bmo riesz}.

\bigskip

{\bf Acknowledgments}

This work is done during Dongyong Yang's visit to Macquarie University. He would like to
thank Professors Xuan Thinh Duong and Ji Li for their generous help.

\bigskip

Huoxiong Wu

School of Mathematical Sciences, Xiamen University, Xiamen 361005,  China
\smallskip

{\it E-mail}: \texttt{huoxwu@xmu.edu.cn}

\bigskip

Dongyong Yang

%\smallskip

School of Mathematical Sciences, Xiamen University, Xiamen 361005,  China

\smallskip

{\it E-mail}: \texttt{dyyang@xmu.edu.cn }

\bigskip

Jing Zhang

School of Mathematical Sciences, Xiamen University, Xiamen 361005,  China

School of Mathematics and Statistics, Yili Normal College, Yining Xinjiang 835000, China
\smallskip

{\it E-mail}: \texttt{zjmath66@126.com}

\end{document}